\newtheorem{defn}{Definition}[section]
\newtheorem{lem}[defn]{Lemma}
\newtheorem{re}[defn]{Remark}
\begin{document}
\title{{\bf Triple Derivations and Triple Homomorphisms of Perfect Lie Superalgebras}}
\author{\normalsize \bf Jia Zhou$^1$, Liangyun Chen$^2$, Yao Ma$^2$}
\date{{\small {$^1$ College of Information Technology, Jilin Agriculture
University,\\ Changchun 130118, China}\\{\small {$^2$ School of
Mathematics and Statistics, Northeast Normal
 University,\\
Changchun 130024, China} }}} \maketitle
\date{}

   {\bf\begin{center}{Abstract}\end{center}}

In this paper, we study triple derivations and triple homomorphisms
of perfect Lie superalgebras over a commutative ring $R$. It is
proved that, if the base ring contains $\frac{1}{2}$, $L$ is a
perfect Lie superalgebra with zero center, then every triple
derivation of $L$ is a derivation, and every triple derivation of
the derivation algebra ${\rm Der}(L)$ is an inner derivation. Let
$L,~L^{'}$ be Lie superalgebras over a commutative ring $R$, the
notion of triple homomorphism from $L$ to $L^{'}$ is introduced. We
prove that, under certain assumptions, homomorphisms,
anti-homomorphisms, and sums of homomorphisms and anti-homomorphisms
are all triple homomorphisms.

\noindent\textbf{Keywords:} Perfect Lie superalgebras; Triple derivations; Triple homomorphisms; Enveloping Lie superalgebras.\\
\renewcommand{\thefootnote}{\fnsymbol{footnote}}
\footnote[0]{Address correspondence to Prof. Liangyun Chen, School of
Mathematics and Statistics, Northeast
Normal University, Changchun 130024, China; E-mail: chenly640@nenu.edu.cn.}
\footnote[0]{ Supported by  NNSF of China (No. 11171055 and
11471090) and  China Postdoctoral Science
Foundation (No. 2015M581989). }

\section{Introduction}
In studies to derivations of associative algebras
 ({\rm\cite{BC}},{\rm\cite{HJ}},{\rm\cite{S}}) appear naturally
different sorts of triple derivations such as associative triple
derivations, Jordan triple derivations and Lie triple derivations.
Lie triple derivations are interesting not only to studies of
associative rings and associative algebras, but also to studies such
as that of Lie groups{\rm\cite{M4}} and operator
algebras({\rm\cite{M3}},{\rm\cite{JW}},{\rm\cite{ZHC}}). Triple
derivation of Lie algebra is apparently a generalization of
derivation, and is an analogy of triple derivation of associative
algebra and of Jordan algebra. It was first introduced independently
{\rm\cite{M4}} by M$\rm{\ddot u}$ller where it was called
prederivation. It can be easily checked that, for any Lie algebra,
every derivation is a Lie triple derivation, but the converse does
not always hold{\rm\cite{Z1}}.

The relations of homomorphisms, anti-homomorphisms, Jordan
homomorphisms, Lie homomorphisms, and Lie triple homomorphisms
became attractive questions. Bresar gave a characterization of Lie
triple isomorphisms associated to certain associative
algebras{\rm\cite{B}}. Jacobson and Rickart gave some conditions
such that every Jordan homomorphism of a ring is either a
homomorphism or an anti-homomorphism{\rm\cite{J}}. Similar problems
arose in the study of operator
algebras({\rm\cite{M1}},{\rm\cite{M2}}). An analogous result was
proved for more general perfect Lie algebras{\rm\cite{Z2}}.

Lie superalgebras are the natural generalization of Lie algebras,
and have important applications both in mathematics and physics.
 Lie superalgebras are also interesting from a purely mathematical point of view. The aim of this article is to generalize some results
in{\rm\cite{Z1}}and{\rm\cite{Z2}}to the triple derivations and
triple homomorphisms of Lie superalgebras.

Throughout the following sections, $L$ always denote a Lie
superalgebra over a commutative ring $R$ with $1$. A Lie
superalgebra $L$ is called perfect if the derived subalgebra $[L,L]
= L$. For a subset $S$ of $L$, denote by ${\rm C}_{L}(S)$ the
centralizer of $S$ in $L$, and the center of $L$ is denoted by ${\rm
Z}(L)$. $L$ is called centerless if ${\rm Z}(L)=0$. ${\rm Der}(L)$
is the derivation algebra of $L$.

Some definitions needed in this paper are as follows.
\begin{defn}{\rm\cite{K}}
$L=L_{\bar{0}}\oplus L_{\bar{1}}$ is a $\bf Z_{2}$-graded algebra
over a commutative ring $R$ with $1$, we call $L$ a Lie superalgebra
if the multiplication $[~,~]$ satisfies the following identities:
\begin{equation*}
\begin{split}
&(1)\, [x,y]=-(-1)^{\left|x\right|\left|y\right|}[y,x];~~~~~~~~~~~~~~~~~~~~~~~~~~~~~(graded~skew-symmetry)\\
&(2)\,
[x,[y,z]]=[[x,y],z]+(-1)^{\left|x\right|\left|y\right|}[y,[x,z]];~~~~~~~~~~~(graded~Jacobi~identity)
\end{split}
\end{equation*}
where $x,y,z\in hg(L),$  $hg(L)$ denotes the set of all $\bf
Z_2$-homogeneous elements of $L$. If $|x|$ occurs in some expression
in this paper, we always regard $x$ as a $\bf Z_2$-homogeneous
element and $|x|$ as the $\bf Z_2$-degree of $x$.
\end{defn}
\begin{defn}
For a subset $S$ of $L$, the enveloping Lie superalgebra of $S$ is the
Lie subalgebra of $L$ generated by $S$. A Lie superalgebra is called
indecomposable if it cannot be written as a direct sum of two
nontrivial ideals.
\end{defn}
\begin{defn}
An endomorphism $D$ of an R-module $L$  is called a triple
derivation of $L$, if $\forall x, y, z\in L$, D satisfies
\begin{equation*}
D([[x,y],z])=[[D(x),y],z]+(-1)^{\left|D\right|\left|x\right|}[[x,D(y)],z]+(-1)^{\left|D\right|(\left|x\right|+\left|y\right|)}[[x,y],D(z)].
\end{equation*}
\end{defn}
Denote by ${\rm TDer}(L)$ the set of all triple derivations of $L$.
It is not difficult to show that ${\rm TDer}(L)$ is a Lie
superalgebra under the usual bracket of endomorphisms of R-module
(See Lemma 2.1 ).
\begin{defn}
Let $L,~L^{'}$ be two Lie superalgebras over $R$.  An even
$R$-linear mapping $f:L\rightarrow L^{'}$ is called:

(i)\, a homomorphism from $L$ to $L^{'}$ if it satisfies
$f([x,y])=[f(x),f(y)],~\forall x,y\in L.$

(ii)\,  an anti-homomorphism if it satisfies
$f([x,y])=(-1)^{\left|x\right|\left|y\right|}[f(y),f(x)]$,$\forall
x,y\in hg(L).$

(iii)\, a triple homomorphism if it satisfies
$f([x,[y,z]])=[f(x),[f(y),f(z)]],~\forall x,y,z\in L.$

\end{defn}
\begin{defn}
Let $L,~L^{'}$ be Lie superalgebras. A mapping $g:L\rightarrow
L^{'}$ is called a direct sum of $g_{1}$ and $g_{2}$, if
$g=g_{1}+g_{2}$ and there exist ideals $I_{1}, I_{2}$ of the
enveloping Lie superalgebra of $g(L)$ such that $I_{1}\cap I_{2}=0$,
and $g_{1}(L)\subseteq I_{1},~g_{2}(L)\subseteq I_{2}$.
\end{defn}

The main results of this article are the following two theorems.

\noindent {\bf Theorem 1.1}\quad {\rm Let $L$ be a Lie superalgebra
over a commutative ring $R$. If $\frac{1}{2}\in R$, $L$ is perfect
and has zero center, then we have that:
\begin{enumerate}[(1)]
\item ${\rm TDer}(L) = {\rm Der}(L);$
\item ${\rm TDer}({\rm Der}(L)) = ad({\rm Der}(L)).$
\end{enumerate}}
\noindent {\bf Theorem 1.2}\quad {\rm Suppose that $R$ is a
commutative ring with $1$, and $2$ is invertible in $R$. Let $L$ and
$L^{'}$ be Lie superalgebras over $R$, $f$ a triple homomorphism
from $L$ to $L^{'}$, and $M$ the enveloping Lie superalgebra of
$f(L)$. Assume the following statements:
\begin{enumerate}[(1)]
\item $L$ is perfect;
\item $M$ is centerless and can be decomposed into a direct sum of indecomposable ideals.

 Then $f$ is either a homomorphism or an anti-homomorphism or a
direct sum of a homomorphism and an anti-homomorphism.
\end{enumerate}}

\section{Triple Derivations of Perfect Lie Superalgebras}

We proceed to prove $\rm Theorem~1.1$ by the following lemmas.

\begin{lem}\label{lem:1}
For any Lie superalgerba $L$, ${\rm TDer}(L)$ is closed under the
usual Lie bracket.
\end{lem}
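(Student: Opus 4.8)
The plan is to show that $TDer(L)$ is a Lie subalgebra of $\operatorname{End}_R(L)$ under the commutator bracket $[D_1,D_2] = D_1 D_2 - (-1)^{|D_1||D_2|} D_2 D_1$. Since $TDer(L)$ is clearly an $R$-submodule (the defining identity is $R$-linear in $D$), the only thing to verify is closure under this bracket: given homogeneous $D_1, D_2 \in TDer(L)$, I must check that $[D_1, D_2]$ again satisfies the triple derivation identity from Definition 1.2 for all $x, y, z \in hg(L)$, which then extends to all of $L$ by linearity.

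First I would fix homogeneous $D_1, D_2$ and homogeneous $x, y, z$, and abbreviate the "triple action" by writing $T(a,b,c) = [[a,b],c]$. The defining property says each $D_i$ acts on $T(x,y,z)$ as a graded derivation distributed across the three slots, with the appropriate Koszul sign $(-1)^{|D_i|(|\text{preceding args}|)}$ picked up as $D_i$ passes each argument. The strategy is the standard one for commutators of derivation-type operators: apply $D_1 D_2$ to $T(x,y,z)$ by first expanding $D_2 T(x,y,z)$ into its three terms, then applying $D_1$ to each, expanding each of those into three further terms. This produces nine terms: three "diagonal" terms where both $D_1$ and $D_2$ hit the same argument, and six "off-diagonal" terms where they hit distinct arguments. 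I would do the same for $(-1)^{|D_1||D_2|} D_2 D_1 T(x,y,z)$.

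The key cancellation is that the six off-diagonal terms are identical between $D_1 D_2$ and $D_2 D_1$ up to the sign factor, so when I subtract they cancel, leaving only the three diagonal terms, which reassemble precisely into the triple derivation identity for $[D_1, D_2]$ applied to $T(x,y,z)$. The entire content of the lemma is therefore bookkeeping of the Koszul signs: I must confirm that when $D_1$ and $D_2$ both act on, say, the first argument $x$, the composite sign matches what the identity demands of $[D_1,D_2]$ acting in the first slot, namely $(-1)^{|[D_1,D_2]|\cdot 0} = 1$ on the first slot and the analogous factors $(-1)^{|[D_1,D_2]||x|}$ and $(-1)^{|[D_1,D_2]|(|x|+|y|)}$ on the second and third. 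Here $|[D_1,D_2]| = |D_1| + |D_2|$, so these sign identities reduce to checking that the product of the individual signs $(-1)^{|D_2|(\cdots)}(-1)^{|D_1|(\cdots)}$, together with the $(-1)^{|D_1||D_2|}$ from the second composite, collapses correctly.

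The main obstacle is exactly this sign verification in the graded setting, since a careless treatment of the $\mathbf{Z}_2$-degrees — particularly the cross-term $(-1)^{|D_1||D_2|}$ and the degree shifts incurred as an operator passes an argument it does not act on — is where such arguments typically go wrong. I expect no conceptual difficulty beyond this: once the off-diagonal terms are seen to cancel and the diagonal signs are matched, the result follows directly. A clean way to organize the computation is to treat each of the three slots separately, so that the argument reduces to the familiar fact that the graded commutator of two graded derivations (of a single bilinear bracket operation) is again a graded derivation, applied here slot by slot to the ternary operation $T$.
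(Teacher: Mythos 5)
Your proposal is correct and follows essentially the same route as the paper's own proof: both expand $D_1D_2$ and $D_2D_1$ on $[[x,y],z]$ into nine terms each, observe that the six off-diagonal terms (where $D_1$ and $D_2$ hit different slots) cancel in $D_1D_2-(-1)^{|D_1||D_2|}D_2D_1$ after the Koszul signs are tracked, and recognize the surviving diagonal terms as the triple derivation identity for $[D_1,D_2]$ with $|[D_1,D_2]|=|D_1|+|D_2|$. The only difference is presentational: the paper writes out every signed term explicitly, while you organize the same computation slot by slot.
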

\begin{proof}
Let $D_{1},D_{2}\in {\rm TDer}(L),x_{1},x_{2}\in hg(L),x_{3}\in L$.
By the definition of triple derivation, we have
\begin{equation*}
\begin{split}
&D_{1}D_{2}([[x_{1},x_{2}],x_{3}])\\
=&D_{1}([[D_{2}(x_{1}),x_{2}],x_{3}]+(-1)^{\left|D_{2}\right|\left|x_{1}\right|}[[x_{1},D_{2}(x_{2}],x_{3}]+(-1)^{\left|D_{2}\right|(\left|x_{1}\right|+\left|x_{2}\right|)}[[x_{1},x_{2}],D_{2}(x_{3})])\\
=&[D_{1}D_{2}(x_{1}),x_{2}],x_{3}]+(-1)^{\left|D_{2}\right|(\left|x_{1}\right|+\left|x_{2}\right|)}(-1)^{\left|D_{1}\right|(\left|x_{1}\right|+\left|x_{2}\right|)}[[x_{1},x_{2}],D_{1}D_{2}(x_{3})]\\
&+(-1)^{\left|D_{2}\right|\left|x_{1}\right|}[[D_{1}(x_{1}),D_{2}(x_{2})],x_{3}]+(-1)^{\left|D_{1}\right|(\left|D_{2}\right|+\left|x_{1}\right|)}[[D_{2}(x_{1}),D_{1}(x_{2})],x_{3}]\\
&+(-1)^{\left|D_{1}\right|(\left|D_{2}\right|+\left|x_{1}\right|+\left|x_{2}\right|)}[[D_{2}(x_{1}),x_{2}],D_{1}(x_{3})]+(-1)^{\left|D_{2}\right|\left|x_{1}\right|}(-1)^{\left|D_{1}\right|\left|x_{1}\right|}[[x_{1},D_{1}D_{2}(x_{2})],x_{3}]\\
&+(-1)^{\left|D_{2}\right|(\left|x_{1}\right|+\left|x_{2}\right|)}(-1)^{\left|D_{1}\right|\left|x_{1}\right|}[[x_{1},D_{1}(x_{2})],D_{2}(x_{3})]+(-1)^{\left|D_{2}\right|(\left|x_{1}\right|+\left|x_{2}\right|)}[[D_{1}(x_{1}),x_{2}],D_{2}(x_{3})]\\
&+(-1)^{\left|D_{2}\right|\left|x_{1}\right|}(-1)^{\left|D_{1}\right|(\left|D_{2}\right|+\left|x_{1}\right|+\left|x_{2}\right|)}[[x_{1},D_{2}(x_{2})],D_{1}(x_{3})],
\end{split}
\end{equation*}
and
\begin{equation*}
\begin{split}
&D_{2}D_{1}([[x_{1},x_{2}],x_{3}])\\
=&D_{2}([[D_{1}(x_{1}),x_{2}],x_{3}]+(-1)^{\left|D_{1}\right|\left|x_{1}\right|}[[x_{1},D_{1}(x_{2}],x_{3}]+(-1)^{\left|D_{1}\right|(\left|x_{1}\right|+\left|x_{2}\right|)}[[x_{1},x_{2}],D_{1}(x_{3})])\\
=&[[D_{2}D_{1}(x_{1}),x_{2}],x_{3}]+(-1)^{\left|D_{1}\right|(\left|x_{1}\right|+\left|x_{2}\right|)}(-1)^{\left|D_{2}\right|(\left|x_{1}\right|+\left|x_{2}\right|)}[[x_{1},x_{2}],D_{2}D_{1}(x_{3})]\\
&+(-1)^{\left|D_{1}\right|\left|x_{1}\right|}[[D_{2}(x_{1}),D_{1}(x_{2})],x_{3}]+(-1)^{\left|D_{2}\right|(\left|D_{1}\right|+\left|x_{1}\right|)}[[D_{1}(x_{1}),D_{2}(x_{2})],x_{3}]\\
&+(-1)^{\left|D_{2}\right|(\left|D_{1}\right|+\left|x_{1}\right|+\left|x_{2}\right|)}[[D_{1}(x_{1}),x_{2}],D_{2}(x_{3})]+(-1)^{\left|D_{1}\right|\left|x_{1}\right|}(-1)^{\left|D_{2}\right|\left|x_{1}\right|}[[x_{1},D_{2}D_{1}(x_{2})],x_{3}]\\
&+(-1)^{\left|D_{1}\right|(\left|x_{1}\right|+\left|x_{2}\right|)}(-1)^{\left|D_{2}\right|\left|x_{1}\right|}[[x_{1},D_{2}(x_{2})],D_{1}(x_{3})]+(-1)^{\left|D_{1}\right|(\left|x_{1}\right|+\left|x_{2}\right|)}[[D_{2}(x_{1}),x_{2}],D_{1}(x_{3})]\\
&+(-1)^{\left|D_{1}\right|\left|x_{1}\right|}(-1)^{\left|D_{2}\right|(\left|D_{1}\right|+\left|x_{1}\right|+\left|x_{2}\right|)}[[x_{1},D_{1}(x_{2})],D_{2}(x_{3})].
\end{split}
\end{equation*}
Then from easy computation we have
\begin{equation*}
\begin{split}
&[D_{1},D_{2}]([[x_{1},x_{2}],x_{3}])\\
=&D_{1}D_{2}([[x_{1},x_{2}],x_{3}])-(-1)^{\left|D_{1}\right|\left|D_{2}\right|}D_{2}D_{1}([[x_{1},x_{2}],x_{3}])\\
=&[[[D_{1},D_{2}](x_{1}),x_{2}],x_{3}]+(-1)^{\left|x_{1}\right|(\left|D_{1}\right|+\left|D_{2}\right|)}[[x_{1},[D_{1},D_{2}](x_{2})],x_{3}]\\
&+(-1)^{(\left|D_{1}\right|+\left|D_{2}\right|)(\left|x_{1}\right|+\left|x_{2}\right|)}[[x_{1},x_{2}],[D_{1},D_{2}](x_{3})].
\end{split}
\end{equation*}
Hence, $[D_{1},D_{2}]\in {\rm TDer}(L).$ The lemma is proved.
\end{proof}

Clearly, ${\rm ad}(L), {\rm Der}(L)$ are all subalgebras of ${\rm
TDer}(L)$. Since $L$ is perfect, every element $x\in hg(L)$ can be
written as a finite sum of Lie brackets, i.e., there exists a finite
index set $I$ such that $x=\sum_{i\in I\atop
\left|x_{i1}\right|+\left|x_{i2}\right|=\left|x\right|}[x_{i1},x_{i2}],$
for some $x_{i1},x_{i2}\in L.$ In this article, we always put $\sum$
in place of $\sum_{i\in I\atop
\left|x_{i1}\right|+\left|x_{i2}\right|=\left|x\right|}$ for
convenience.

Moreover, we have the following lemma.
\begin{lem}\label{lem:1}
If $L$ is perfect, then ${\rm ad}(L)$ is an ideal of Lie
superalgebra ${\rm TDer}(L)$.
\end{lem}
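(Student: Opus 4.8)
The plan is to verify that $ad(L)$ is stable under bracketing with arbitrary triple derivations, since the excerpt already records that $ad(L)$ is a subalgebra of $TDer(L)$; an ideal is then obtained once we check $[TDer(L), ad(L)] \subseteq ad(L)$ (graded antisymmetry of the bracket on $TDer(L)$ takes care of the other side). Because the ideal condition is $R$-linear and respects the grading, it suffices to fix a homogeneous $D \in TDer(L)$ and a homogeneous $x \in L$ and prove that the endomorphism $[D, ad_x]$ again lies in $ad(L)$.

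The engine of the argument is perfectness. First I would write $x = \sum [x_{i1}, x_{i2}]$ and, using that $ad$ is $R$-linear, reduce to treating a single bracket $x = [a,b]$. For such an $x$ I would evaluate $[D, ad_x]$ on an arbitrary $w \in L$: by definition of the super-commutator and of $ad$,
$$[D, ad_x](w) = D([[a,b],w]) - (-1)^{|D|(|a|+|b|)}[[a,b], D(w)].$$
Applying the triple-derivation identity (Definition 1.3) to $D([[a,b],w])$ expands the first term into three summands, the last of which is precisely $(-1)^{|D|(|a|+|b|)}[[a,b],D(w)]$. This summand cancels against the subtracted term, leaving
\begin{equation*}
[D, ad_{[a,b]}](w) = \bigl[\,[D(a),b] + (-1)^{|D||a|}[a, D(b)]\,,\, w\,\bigr] = ad_{y}(w),
\end{equation*}
where $y = [D(a),b] + (-1)^{|D||a|}[a, D(b)] \in L$.

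Since this equality holds for every $w \in L$, I conclude $[D, ad_{[a,b]}] = ad_y \in ad(L)$. Summing over the decomposition $x = \sum[x_{i1}, x_{i2}]$ then yields $[D, ad_x] = ad_{\sum_i y_i} \in ad(L)$, with $y_i = [D(x_{i1}), x_{i2}] + (-1)^{|D||x_{i1}|}[x_{i1}, D(x_{i2})]$. Together with $ad(L)$ being a subalgebra, this establishes that $ad(L)$ is an ideal of $TDer(L)$.

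The computation is short, so there is no real obstacle; the point demanding care is that the argument hinges entirely on the last summand of the triple-derivation identity matching the subtracted term of $[D, ad_x]$, which makes accurate bookkeeping of the sign exponents $(-1)^{|D||x|}$ the only delicate step. It is worth noting explicitly that although the witness $y$ depends on the chosen bracket decomposition of $x$, the endomorphism $[D, ad_x]$ does not, so no well-definedness issue arises: we are simply exhibiting $[D, ad_x]$ as $ad$ of one admissible choice, which is all that membership in $ad(L)$ requires.
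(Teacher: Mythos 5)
Your proposal is correct and follows essentially the same route as the paper: both hinge on expanding $D([[x_{i1},x_{i2}],w])$ via the triple-derivation identity and cancelling the third summand against $(-1)^{|D||x|}[[x_{i1},x_{i2}],D(w)]$, exhibiting $[D,ad_x]$ as $ad$ of the explicit element $\sum\bigl([D(x_{i1}),x_{i2}]+(-1)^{|D||x_{i1}|}[x_{i1},D(x_{i2})]\bigr)$. Your reduction to a single bracket $x=[a,b]$ before summing, and your closing remark that no well-definedness issue arises (in contrast to the construction of $\delta_D$ in the next lemma, which requires $Z(L)=0$), are only presentational differences from the paper's computation, which carries the sum throughout.
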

\begin{proof}
Let $D\in {\rm TDer}(L), x\in hg(L)$. $\forall z\in L$, we have
\begin{align*}
%\begin{split}
 &[D,{\rm ad}x](z)=D{\rm ad}x(z)-(-1)^{\left|D\right|\left|x\right|}{\rm ad}x(D(z))\\
&=D[x,z]-(-1)^{\left|D\right|\left|x\right|}[x,D(z)]\\
 &=D([\sum[x_{i1},x_{i2}],z])-(-1)^{\left|D\right|\left|x\right|}[\sum[x_{i1},x_{i2}],D(z)]\\
 &=\sum D([[x_{i1},x_{i2}],z])-\sum(-1)^{\left|D\right|\left|x\right|}[[x_{i1},x_{i2}],D(z)]\\
 &=\sum[[D(x_{i1}),x_{i2}],z]+\sum(-1)^{\left|D\right|\left|x_{i1}\right|}[[x_{i1},D(x_{i2})],z]\\
 &+\sum(-1)^{\left|D\right|\left|x\right|}[[x_{i1},x_{i2}],D(z)]-\sum(-1)^{\left|D\right|\left|x\right|}[[x_{i1},x_{i2}],D(z)]\\
&={\rm
ad}(\sum[D(x_{i1}),x_{i2}]+\sum(-1)^{\left|D\right|\left|x_{i1}\right|}[x_{i1},D(x_{i2})])(z).
%\end{split}
\end{align*}
By the arbitrariness of $z$, $[D,{\rm ad}x]$ is an inner derivation.
Hence, ${\rm ad}(L)$ is an ideal of ${\rm TDer}(L)$. The lemma
holds.
\end{proof}

\begin{lem}\label{lem:1-2}
If $L$ is a perfect Lie superalgebra with zero center, then there
exits an $R$-module homomorphism $\delta: {\rm TDer}(L)\rightarrow
{\rm End}(L), \delta(D)=\delta_{D}$ such that $\forall x\in L,D\in
{\rm TDer}(L)$, one has $[D,{\rm ad}x]={\rm ad}\delta_{D}(x).$
\end{lem}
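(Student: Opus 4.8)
The plan is to define $\delta_{D}(x)$ as the unique element of $L$ whose adjoint equals the commutator $[D,ad\,x]$, drawing existence from the preceding lemma and uniqueness from the centerless hypothesis. Concretely, since $L$ is perfect, the preceding lemma guarantees that $ad(L)$ is an ideal of $TDer(L)$, so for every $D\in TDer(L)$ and every $x\in hg(L)$ the bracket $[D,ad\,x]$ again lies in $ad(L)$; that is, there is some $w\in L$ with $[D,ad\,x]=ad\,w$. (In fact the proof of that lemma exhibits $w=\sum[D(x_{i1}),x_{i2}]+\sum(-1)^{\left|D\right|\left|x_{i1}\right|}[x_{i1},D(x_{i2})]$ for any chosen expression $x=\sum[x_{i1},x_{i2}]$.) Next I would invoke $Z(L)=0$: since $\ker(ad)=Z(L)=0$, the map $ad\colon L\to TDer(L)$ is injective, so the element $w$ is unique and, in particular, independent of the chosen decomposition of $x$ into brackets. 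I may therefore set $\delta_{D}(x):=w$, i.e. $\delta_{D}(x)$ is the unique element of $L$ with $[D,ad\,x]=ad\,\delta_{D}(x)$, which is exactly the identity demanded in the statement.

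It then remains to verify two linearity claims, and both reduce to the injectivity of $ad$ combined with the bilinearity of the bracket. Fixing $D$ and using $ad(\alpha x+\beta y)=\alpha\,ad\,x+\beta\,ad\,y$ together with $R$-bilinearity of $[\,\cdot\,,\,\cdot\,]$, one obtains $ad\,\delta_{D}(\alpha x+\beta y)=ad(\alpha\delta_{D}(x)+\beta\delta_{D}(y))$; cancelling $ad$ by injectivity shows $\delta_{D}$ is $R$-linear, so $\delta_{D}\in End(L)$. Running the same computation with the roles of $D$ and $x$ interchanged, now using $[\alpha D_{1}+\beta D_{2},ad\,x]=\alpha[D_{1},ad\,x]+\beta[D_{2},ad\,x]$, shows that $\delta\colon TDer(L)\to End(L)$ is itself an $R$-module homomorphism.

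The only genuine obstacle is the well-definedness of $\delta_{D}$, and this is precisely where the centerless hypothesis is indispensable: without $Z(L)=0$ the element $w$ would be determined only modulo $Z(L)$, and no canonical $\delta_{D}$ could be singled out. Everything after that point is a formal consequence of the injectivity of $ad$ and $R$-bilinearity. One minor bookkeeping remark: for homogeneous $D$ the map $\delta_{D}$ is homogeneous of degree $\left|D\right|$, since $ad$ preserves the $\mathbf{Z}_{2}$-grading and $[D,ad\,x]$ has degree $\left|D\right|+\left|x\right|$; the general case follows by extending over the homogeneous components, which is subsumed in the $R$-linearity of $\delta$ just established.
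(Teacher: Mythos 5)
Your proof is correct and takes essentially the same approach as the paper: both characterize $\delta_{D}(x)$ by the identity $ad\,\delta_{D}(x)=[D,ad\,x]$, drawing existence from the explicit formula produced in the proof of Lemma 2.2 and well-definedness from $Z(L)=\ker(ad)=0$. Your packaging---defining $\delta_{D}(x)$ as the unique $ad$-preimage and letting injectivity of $ad$ handle independence of the bracket decomposition---is only a minor streamlining of the paper's direct check that two decompositions of $x$ yield elements $\alpha,\beta$ with $\alpha-\beta\in Z(L)$.
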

\begin{proof}
From $\rm Lemma~2.2$, if $L$ is perfect and $L$ has zero center,
$D\in {\rm TDer}(L)$, we can define a module endomorphism
$\delta_{D}$ on $L$ such that for $\forall x=\sum[x_{i1},x_{i2}]\in
hg(L),$
\begin{equation*}
\begin{split}
&\delta_{D}(x)=\sum([D(x_{i1}),x_{i2}]+(-1)^{\left|D\right|\left|x_{i1}\right|}[x_{i1},D(x_{i2})]).
\end{split}
\end{equation*}
In fact, the definition is independent of the form of expression of
$x$. For proving it, let
\begin{equation*}
\begin{split}
&\alpha=\sum([D(x_{i1}),x_{i2}]+(-1)^{\left|D\right|\left|x_{i1}\right|}[x_{i1},D(x_{i2})]).
\end{split}
\end{equation*}
 If there exists another finite index set
$J$ and $y_{ji}\in L$ such that $x$ can be expressed in the form
$x=\sum_{j\in
J\atop\left|y_{j1}\right|+\left|y_{j2}\right|=\left|x\right|}[y_{j1},y_{j2}],$
we also put $\sum$ in place of $\sum_{j\in J\atop
\left|y_{i1}\right|+\left|y_{i2}\right|=\left|x\right|}$ when
necessary, let
\begin{equation*}
\begin{split}
&\beta=\sum([D(y_{j1}),y_{j2}]+(-1)^{\left|D\right|\left|y_{j1}\right|}[y_{j1},D(y_{j2})]).
\end{split}
\end{equation*}
Since $D\in {\rm TDer}(L),\forall z\in L$, we have
\begin{align*}
%\begin{split}
 &[\alpha,z]=\sum([[D(x_{i1}),x_{i2}],z]+(-1)^{\left|D\right|\left|x_{i1}\right|}[[x_{i1},D(x_{i2})],z])\\
&=\sum(D([[x_{i1},x_{i2}],z])-(-1)^{\left|D\right|\left|x\right|}[[x_{i1},x_{i2}],D(z)])\\
 &=D([x,z])-(-1)^{\left|D\right|\left|x\right|}[x,D(z)]\\
 &=\sum(D([[y_{j1},y_{j2}],z])-(-1)^{\left|D\right|\left|x\right|}[[y_{j1},y_{j2}],D(z)])\\
 &=\sum([[D(y_{j1}),y_{j2}],z]+(-1)^{\left|D\right|\left|y_{j1}\right|}[[y_{j1},D(y_{j2})],z])=[\beta,z].
%\end{split}
\end{align*}
Hence, $[\alpha-\beta,z]=0.$ This means that $\alpha-\beta\in {\rm
Z}(L).$ Since ${\rm Z}(L)=0,~\alpha=\beta.$ Hence, $\delta_{D}$ is
well-defined. The rests of the lemma follow from the proof of $\rm
Lemma~2.2$.
\end{proof}

Using the mapping $\delta_{D}$ and the proof of $\rm Lemma~2.2$, we
have the following lemmas.
\begin{lem}\label{lem:1-3}
If $L$ is a perfect Lie superalgebra with zero center, then for
$\forall D\in {\rm TDer}(L), \delta_{D}\in {\rm Der}(L).$
\end{lem}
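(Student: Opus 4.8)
The plan is to show that $\delta_D$ obeys the graded Leibniz rule by transporting the desired identity into $TDer(L)$ via the adjoint map and then invoking the graded Jacobi identity, so that essentially no fresh computation is required. As usual we may assume $D\in hg(TDer(L))$ and extend by linearity. First I would record three structural facts. Since $Z(L)=0$, the kernel of $ad:L\to TDer(L)$ is trivial, so $ad$ is injective; consequently, to prove an identity among elements of $L$ it suffices to prove the corresponding identity among their images under $ad$. Next, $ad$ is a homomorphism of Lie superalgebras, i.e. $ad[x,y]=[ad\,x,ad\,y]$ for $x,y\in hg(L)$ (this is just a restatement of the graded Jacobi identity for $L$), and $|ad\,x|=|x|$. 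Finally, from the defining relation $[D,ad\,x]=ad\,\delta_D(x)$ of Lemma 2.3 one reads off $|\delta_D(x)|=|D|+|x|$, so $\delta_D$ is homogeneous of degree $|D|$, and the derivation identity to establish is
\[
\delta_D([x,y])=[\delta_D(x),y]+(-1)^{|D||x|}[x,\delta_D(y)],\qquad x,y\in hg(L).
\]

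Next I would apply $ad$ to both sides and use injectivity to reduce the claim to an identity in $TDer(L)$. For the left-hand side, the homomorphism property and the defining relation give $ad\,\delta_D([x,y])=[D,ad[x,y]]=[D,[ad\,x,ad\,y]]$. For the right-hand side, again using that $ad$ is a homomorphism together with $ad\,\delta_D(x)=[D,ad\,x]$ and $ad\,\delta_D(y)=[D,ad\,y]$, one obtains
\[
ad\bigl([\delta_D(x),y]+(-1)^{|D||x|}[x,\delta_D(y)]\bigr)=[[D,ad\,x],ad\,y]+(-1)^{|D||x|}[ad\,x,[D,ad\,y]].
\]

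Thus it remains only to verify
\[
[D,[ad\,x,ad\,y]]=[[D,ad\,x],ad\,y]+(-1)^{|D||x|}[ad\,x,[D,ad\,y]],
\]
which is precisely the graded Jacobi identity (identity (2) of Definition 1.1) in the Lie superalgebra $TDer(L)$, applied to the homogeneous triple $D,\,ad\,x,\,ad\,y$, where the sign exponent $|D||ad\,x|=|D||x|$ by $|ad\,x|=|x|$. Since Lemma 2.1 guarantees that $TDer(L)$ is a Lie superalgebra and $D,ad\,x,ad\,y\in TDer(L)$, this identity holds automatically. Injectivity of $ad$ then yields the Leibniz rule for $\delta_D$, proving $\delta_D\in Der(L)$.

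I expect the only delicate point to be the sign and parity bookkeeping: one must confirm that $\delta_D$ has the same $\mathbf{Z}_2$-degree as $D$, so that the exponent $(-1)^{|D||x|}$ demanded by the target Leibniz rule coincides exactly with the factor produced by the graded Jacobi identity on the triple $D,ad\,x,ad\,y$. Once $\delta$ is routed through $ad$ this matching is immediate, and the heavy algebra has already been absorbed into Lemmas 2.2 and 2.3; no lengthy term-by-term expansion is needed.
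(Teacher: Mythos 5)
Your proposal is correct and follows essentially the same route as the paper: both compute $[D,ad([x,y])]=[D,[ad\,x,ad\,y]]$ via Lemma 2.3, expand with the graded Jacobi identity in $TDer(L)$, pull everything back through $ad$, and conclude by injectivity of $ad$ from $Z(L)=0$. The only difference is presentational, as you apply $ad$ to the target Leibniz identity while the paper evaluates $[D,ad([x,y])]$ in two ways, which is the same computation.
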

\begin{proof}
Suppose $D\in {\rm TDer}(L), x\in hg(L),y\in L$. Then $[D,{\rm
ad}([x,y])]={\rm ad}\delta_{D}([x,y]).$

In other hand,
\begin{equation*}
\begin{split}
&[D,{\rm ad}([x,y])]=[D,[{\rm ad}x,{\rm ad}y]]\\
&=[[D,{\rm ad}x],{\rm ad}y]+(-1)^{\left|D\right|\left|x\right|}[{\rm ad}x,[D,{\rm ad}y]]\\
&=[{\rm ad}\delta_{D}(x),{\rm ad}y]+(-1)^{\left|D\right|\left|x\right|}[{\rm ad}x,{\rm ad}\delta_{D}(y)]\\
&={\rm ad}([\delta_{D}(x),y]+(-1)^{\left|D\right|\left|x\right|}[x,\delta_{D}(y)]).\\
\end{split}
\end{equation*}
Hence, ${\rm ad}\delta_{D}([x,y])={\rm
ad}([\delta_{D}(x),y]+(-1)^{\left|D\right|\left|x\right|}[x,\delta_{D}(y)]).$
Since ${\rm
Z}(L)=0,~\delta_{D}([x,y]=[\delta_{D}(x),y]+(-1)^{\left|D\right|\left|x\right|}[x,\delta_{D}(y)]$.
By the arbitrariness of $x,y,\delta_{D}\in {\rm Der}(L).$
\end{proof}

\begin{lem}\label{lem:2}
If the base ring $R$ contains $\frac{1}{2}, L$ is perfect, then the
centralizer of ${\rm ad}(L)$ in ${\rm TDer}(L)$is trivial, i.e.,
${\rm C}_{{\rm TDer}L}({\rm ad}(L))=0$. In particular, the center of
${\rm TDer}(L)$ is zero.
\end{lem}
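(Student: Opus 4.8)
The plan is to take a homogeneous $D\in C_{TDer(L)}(ad(L))$ (the centralizer is a graded subspace of the graded algebra $TDer(L)$, so it suffices to treat homogeneous elements and conclude by linearity) and to show $D=0$. The hypothesis $[D,adx]=0$ unwinds, for every homogeneous $x$ and every $z$, to the identity $D([x,z])=(-1)^{|D||x|}[x,D(z)]$. My first move is to extract a complementary identity from this one using graded skew-symmetry: rewriting $[x,z]=-(-1)^{|x||z|}[z,x]$, applying the same commuting relation to the pair $(z,x)$, and then pushing $D(x)$ back through the bracket (noting $|D(x)|=|D|+|x|$), the accumulated sign exponent turns out to be even and collapses, giving $D([x,z])=[D(x),z]$. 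Comparing the two expressions for $D([x,z])$ then produces the balance relation $[D(x),z]=(-1)^{|D||x|}[x,D(z)]$.

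Next I would feed the commuting hypothesis into the triple-derivation identity for $D$. Writing $[[x,y],z]=ad([x,y])(z)$ and applying the commuting relation to $ad([x,y])$ (which is homogeneous of degree $|x|+|y|$) gives $D([[x,y],z])=(-1)^{|D|(|x|+|y|)}[[x,y],D(z)]$, which is precisely the third summand of the triple-derivation identity
\[
D([[x,y],z])=[[D(x),y],z]+(-1)^{|D||x|}[[x,D(y)],z]+(-1)^{|D|(|x|+|y|)}[[x,y],D(z)].
\]
Cancelling that summand on both sides leaves $[[D(x),y]+(-1)^{|D||x|}[x,D(y)],z]=0$ for all $z$, so the element $u=[D(x),y]+(-1)^{|D||x|}[x,D(y)]$ lies in $Z(L)$.

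Finally I would exploit the balance relation to simplify $u$: since $(-1)^{|D||x|}[x,D(y)]=[D(x),y]$, we get $u=2[D(x),y]$, and because $\frac{1}{2}\in R$ this forces $[D(x),y]\in Z(L)$. The relation $D([x,y])=[D(x),y]$ from the first step then gives $D([x,y])\in Z(L)$, and since $L$ is perfect every element is a finite sum of brackets, whence $D(L)\subseteq Z(L)$. Now each $D(x)$ is central, so $[D(x),y]=0$, hence $D([x,y])=[D(x),y]=0$, and perfectness upgrades this to $D=0$. The concluding assertion is then immediate: as $ad(L)\subseteq TDer(L)$, the center $Z(TDer(L))$ commutes with $ad(L)$ and therefore lies inside $C_{TDer(L)}(ad(L))=0$.

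The step I expect to be the main obstacle is the sign bookkeeping that yields the balance relation, together with the conceptual recognition that the commuting hypothesis forces $D$ to act like a one-slot derivation. The factor $\frac{1}{2}$ is genuinely essential here: without it a scalar map such as the identity would survive the first two paragraphs unharmed (it commutes with every $adx$ and satisfies both one-slot relations), and it is exactly the $2$ emerging from the balance relation, combined with the triple-derivation identity, that rules such maps out.
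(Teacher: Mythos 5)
Your proof is correct, and its core is the same as the paper's: both extract from $[D,ad\,x]=0$ the one-slot identities $D([x,y])=(-1)^{|D||x|}[x,D(y)]$ and, via graded skew-symmetry, $D([x,y])=[D(x),y]$, and both then play these off against the triple-derivation identity, with $\frac{1}{2}\in R$ absorbing the resulting factor of $2$. The difference is the endgame. The paper substitutes the one-slot identities into all three summands at once, obtaining $D([[x_1,x_2],x_3])=3D([[x_1,x_2],x_3])$, hence $2D([[x_1,x_2],x_3])=0$, hence $D=0$ because perfectness makes $L$ the span of triple brackets (using $L=[L,L]=[[L,L],L]$). You instead cancel only the third summand, observe that $u=[D(x),y]+(-1)^{|D||x|}[x,D(y)]=2[D(x),y]$ is central, and then run the chain $D(L)\subseteq Z(L)$, so $[D(x),y]=0$, so $D([x,y])=0$, so $D=0$ by perfectness. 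Your route needs only that $L$ is spanned by double brackets, at the price of a detour through $Z(L)$ — which the paper's proof never invokes; note the lemma does not assume $L$ centerless, and indeed neither argument requires it. The paper's route is shorter but silently uses the triple-bracket spanning fact. A minor point in your favor: you make explicit the reduction to homogeneous $D$ (the centralizer being a graded submodule), which the paper leaves to its standing conventions on homogeneity.
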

\begin{proof}
Let $D\in {\rm C}_{{\rm TDer}L}({\rm ad}(L)).$ Then for $\forall
x\in L$, $[D,{\rm ad}x]=0.$ Hence, for $\forall x,y\in hg(L),
D([x,y])-(-1)^{\left|D\right|\left|x\right|}[x,D(y)]=[D,{\rm
ad}x](y)=0.$ Thus,
$D([x,y])=[D(x),y]=(-1)^{\left|D\right|\left|x\right|}[x,D(y)]$.

For $x_{1},x_{2},x_{3}\in hg(L),$ we always have that
\begin{equation*}
\begin{split}
&D([[x_{1},x_{2}],x_{3}])=(-1)^{\left|D\right|(\left|x_{1}\right|+\left|x_{2}\right|)}[[x_{1},x_{2}],D(x_{3})]\\
&=(-1)^{\left|D\right|\left|x_{1}\right|}[[x_{1},D(x_{2})],x_{3}]=[[D(x_{1}),x_{2}],x_{3}].
\end{split}
\end{equation*}
Therefore,
\begin{equation*}
\begin{split}
&D([[x_{1},x_{2}],x_{3}])\\
=&[[D(x_{1}),x_{2}],x_{3}]+(-1)^{\left|D\right|\left|x_{1}\right|}[[x_{1},D(x_{2})],x_{3}]+(-1)^{\left|D\right|(\left|x_{1}\right|+\left|x_{2}\right|)}[[x_{1},x_{2}],D(x_{3})]\\
=&3D([[x_{1},x_{2}],x_{3}]).
\end{split}
\end{equation*}
Hence, $2D([[x_{1},x_{2}],x_{3}])=0.$ Because $\frac{1}{2}\in
R,~D([[x_{1},x_{2}],x_{3}])=0.$ Since $L$ is perfect, every element
of $L$ can be expressed as the linear combination of elements of the
form $[[x_{1},x_{2}],x_{3}]$, we have that $D=0$. This completes the
proof.
\end{proof}

The next lemma is well known.
\begin{lem}\label{lem:2}{\rm\cite{K}}
For all Lie superalgebra $L$, if $x\in L,~D\in {\rm Der}(L)$, then
$[D,{\rm ad}x]={\rm ad}(D(x)).$
\end{lem}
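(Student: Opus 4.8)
The plan is to verify the identity by evaluating both sides on an arbitrary element $z\in L$ and comparing. First I would reduce to the case where $z$ is $\mathbf{Z}_2$-homogeneous: both $[D,ad\,x]$ and $ad(D(x))$ are $R$-linear endomorphisms of $L$, and $L$ is spanned over $R$ by its homogeneous elements, so it suffices to check equality on such $z$. By the same bilinearity it also suffices to take $x\in hg(L)$, which is what lets me use the graded derivation rule and the graded bracket with their prescribed signs.

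Next I would unwind the left-hand side using the super-commutator in the Lie superalgebra $End(L)$. Since $ad\,x$ has $\mathbf{Z}_2$-degree $\left|x\right|$, the bracket of endomorphisms reads $[D,ad\,x]=D\circ(ad\,x)-(-1)^{\left|D\right|\left|x\right|}(ad\,x)\circ D$, so that evaluating on $z$ gives $[D,ad\,x](z)=D([x,z])-(-1)^{\left|D\right|\left|x\right|}[x,D(z)]$.

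The key step is then to apply the defining property of a derivation to the term $D([x,z])$, namely $D([x,z])=[D(x),z]+(-1)^{\left|D\right|\left|x\right|}[x,D(z)]$. Substituting this into the previous expression, the two occurrences of $(-1)^{\left|D\right|\left|x\right|}[x,D(z)]$ cancel, leaving $[D,ad\,x](z)=[D(x),z]=ad(D(x))(z)$. Since $z$ was an arbitrary homogeneous element and both sides are $R$-linear, linearity extends the equality to all of $L$, yielding $[D,ad\,x]=ad(D(x))$.

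I do not expect any serious obstacle here: once the super-commutator is written with the correct sign, the whole argument is essentially a one-line cancellation. The only point demanding care is the $\mathbf{Z}_2$-degree bookkeeping — in particular, remembering that $\left|ad\,x\right|=\left|x\right|$, so that the sign $(-1)^{\left|D\right|\left|x\right|}$ attached to the commutator term matches exactly the sign produced by the graded Leibniz rule. It is precisely this coincidence of signs that forces the two middle terms to cancel and makes the identity hold.
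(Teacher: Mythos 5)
Your proof is correct: the sign bookkeeping checks out, since $|ad\,x| = |x|$ makes the term $(-1)^{\left|D\right|\left|x\right|}[x,D(z)]$ from the super-commutator cancel exactly against the corresponding term from the graded Leibniz rule, leaving $[D,ad\,x](z)=[D(x),z]$. The paper gives no proof of this lemma (it is stated as well known and attributed to Kac), and your computation is precisely the standard verification one would write down.
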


Now we can prove the first conclusion of the theorem.

\begin{lem}\label{lem:2-2}
If the base ring $R$ contains $\frac{1}{2}$, $L$ is perfect and has
trivial center, then ${\rm TDer}(L)={\rm Der}(L).$
\end{lem}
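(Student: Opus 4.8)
The plan is to prove the two inclusions $Der(L)\subseteq TDer(L)$ and $TDer(L)\subseteq Der(L)$ separately. The first inclusion is already recorded in the discussion following Lemma~2.1, where it is observed that $Der(L)$ is a subalgebra of $TDer(L)$: applying any derivation to $[[x,y],z]$ and expanding the derivation property twice reproduces exactly the triple-derivation identity. So the entire content to be verified is the reverse inclusion $TDer(L)\subseteq Der(L)$.

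For the reverse inclusion I would fix $D\in TDer(L)$ and bring in the endomorphism $\delta_D\in End(L)$ supplied by Lemma~2.3. Two facts drive the argument. First, Lemma~2.4 guarantees $\delta_D\in Der(L)$, hence $\delta_D\in TDer(L)$ as well, so $D-\delta_D$ again lies in $TDer(L)$. Second, there are two separate formulas for the bracket of these maps against $adx$: Lemma~2.3 gives $[D,adx]=ad\,\delta_D(x)$ for all $x\in L$, while Lemma~2.6, applied to the derivation $\delta_D$, gives $[\delta_D,adx]=ad(\delta_D(x))$. Before subtracting, I would note that $\delta_D$ has the same $\mathbf{Z}_2$-parity as $D$, which is immediate from its defining formula $\delta_D(x)=\sum([D(x_{i1}),x_{i2}]+(-1)^{|D||x_{i1}|}[x_{i1},D(x_{i2})])$, so that $D-\delta_D$ is a genuine homogeneous element of $TDer(L)$.

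Subtracting the two identities then yields $[D-\delta_D,adx]=ad\,\delta_D(x)-ad\,\delta_D(x)=0$ for every $x\in L$, so $D-\delta_D\in C_{TDer(L)}(ad(L))$. This is exactly the point at which the hypothesis $\frac12\in R$ is used: by Lemma~2.5 the centralizer of $ad(L)$ in $TDer(L)$ is trivial, whence $D-\delta_D=0$, i.e. $D=\delta_D\in Der(L)$. Combining this with the first inclusion gives $TDer(L)=Der(L)$.

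As for the main obstacle, essentially all of the real work has been front-loaded into the earlier lemmas: the well-definedness of $\delta_D$ (Lemma~2.3, via centerlessness), the derivation property of $\delta_D$ (Lemma~2.4), and above all the triviality of the centralizer (Lemma~2.5, where $\frac12\in R$ kills the spurious factor of $2$). Granting those, the present lemma is a short formal argument. The only place demanding care is to confirm that the bracket is interpreted as the commutator of $R$-module endomorphisms uniformly across Lemmas~2.3, 2.5 and 2.6, so that the subtraction step is legitimate, and to keep track of the parity of $\delta_D$ so that Lemma~2.5 genuinely applies to $D-\delta_D$.
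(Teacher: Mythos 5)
Your proposal is correct and follows essentially the same route as the paper's own proof: both use Lemma~2.3 to get $[D,adx]=ad\,\delta_{D}(x)$, Lemmas~2.4 and 2.6 to get $[\delta_{D},adx]=ad(\delta_{D}(x))$, subtract to place $D-\delta_{D}$ in $C_{TDer(L)}(ad(L))$, and then invoke Lemma~2.5 to conclude $D=\delta_{D}\in Der(L)$, with the converse inclusion coming from the remark after Lemma~2.1. Your added attention to the parity of $\delta_{D}$ is a careful touch the paper leaves implicit, but it does not change the argument.
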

\begin{proof}
Suppose $x\in L,~D\in {\rm TDer}(L)$. By $\rm Lemma~2.3,~[D,{\rm
ad}x]={\rm ad}\delta_{D}(x).$ By $\rm Lemma~2.4$ and $\rm
Lemma~2.6$, ${\rm ad}\delta_{D}(x)=[\delta_{D},{\rm ad}x].$ Hence,
$D-\delta_{D}\in {\rm C}_{{\rm TDer}(L)}({\rm ad}(L)).$ By $\rm
Lemma~2.5$, $D-\delta_{D}=0, i.e., D=\delta_{D}\in {\rm Der}(L)$.
Hence, ${\rm TDer}(L)\subseteq {\rm Der}(L).$ The lemma follows from
$\rm Lemma~2.4$.
\end{proof}
The remainder of the chapter aims to prove the second conclusion of
$\rm theorem~1.1$.
\begin{lem}\label{lem:2-3}
If $L$ is a perfect Lie superalgebra, $D\in {\rm TDer}({\rm
Der}(L))$, then $D({\rm ad}(L))\subseteq {\rm ad}(L).$
\end{lem}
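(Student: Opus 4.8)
The plan is to exploit the perfectness of $L$ to realise each generator of $ad(L)$ as a double bracket inside $TDer(L)$, and then to feed that expression into the triple-derivation identity for $D$, controlling every resulting term by means of the fact that $ad(L)$ is an ideal of $TDer(L)$ (Lemma 2.2).

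First I would reduce to homogeneous data: since $D$ and the statement are $R$-linear, I may take $D\in TDer(TDer(L))$ to be $\bf Z_2$-homogeneous and prove $D(adx)\in ad(L)$ for homogeneous $x\in L$, as such elements span $ad(L)$ over $R$. Because $L$ is perfect we have $[[L,L],L]=[L,L]=L$, so every homogeneous $x$ can be written as a finite sum $x=\sum[[x_{i1},x_{i2}],x_{i3}]$ with $x_{ij}\in hg(L)$ and $\left|x_{i1}\right|+\left|x_{i2}\right|+\left|x_{i3}\right|=\left|x\right|$. Applying the homomorphism $ad$ and using $ad([u,v])=[adu,adv]$ (which follows from Lemma 2.6) gives
\[
adx=\sum[[adx_{i1},adx_{i2}],adx_{i3}],
\]
which exhibits $adx$ as a sum of double brackets of elements of $ad(L)\subseteq TDer(L)$. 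This is the structural observation that makes the triple-derivation hypothesis applicable.

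Next I would apply the defining identity of $D\in TDer(TDer(L))$ to each summand. Writing $A=adx_{i1}$, $B=adx_{i2}$, $C=adx_{i3}$ (so that $\left|A\right|=\left|x_{i1}\right|$, etc.), the identity reads
\[
D([[A,B],C])=[[D(A),B],C]+(-1)^{\left|D\right|\left|A\right|}[[A,D(B)],C]+(-1)^{\left|D\right|(\left|A\right|+\left|B\right|)}[[A,B],D(C)].
\]
In each of the three terms exactly one $ad$-factor has been replaced by an arbitrary element of $TDer(L)$, while two genuine factors from $ad(L)$ remain. Since Lemma 2.2 asserts that $ad(L)$ is an ideal of $TDer(L)$, bracketing any element of $TDer(L)$ with an element of $ad(L)$ lands in $ad(L)$; applying this twice to each term (first pairing the arbitrary factor with a neighbouring $ad$-element, then bracketing the result with the remaining $ad$-element) shows that every term lies in $ad(L)$. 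Summing over $i$ yields $D(adx)\in ad(L)$.

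The main obstacle is the first step: recognising that perfectness must be pushed one degree further, to $L=[[L,L],L]$, so that $adx$ becomes a double bracket rather than a single one. A single expression $adx=\sum[adx_{i1},adx_{i2}]$ is not of the shape $[[\cdot,\cdot],\cdot]$ to which the triple-derivation identity applies, so the triple-factored form is genuinely needed. Once that is in place the remainder is bookkeeping: tracking the sign factors (which do not affect membership in $ad(L)$) and invoking the ideal property of Lemma 2.2 repeatedly.
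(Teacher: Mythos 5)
Your proposal is correct and follows essentially the same route as the paper: write $x=\sum[[x_{i1},x_{i2}],x_{i3}]$ using perfectness, push this through $ad$ to get $adx$ as a sum of double brackets in $TDer(L)$, apply the triple-derivation identity, and conclude that each resulting term lies in $ad(L)$ because $ad(L)$ is an ideal of $TDer(L)$ (Lemma 2.2). The paper's proof is exactly this computation, so there is nothing to add.
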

\begin{proof}
Since $L$ is perfect, we have
\begin{align*}
%\begin{split}
 &D({\rm ad}x)=\sum D({\rm ad}[[x_{i1},x_{i2}],x_{i3}])\\
&=\sum D([[{\rm ad}x_{i1},{\rm ad}x_{i2}],{\rm ad}x_{i3}])\\
 &=\sum([[D({\rm ad}x_{i1}),{\rm ad}x_{i2}],{\rm ad}x_{i3}]+(-1)^{\left|D\right|\left|x_{i1}\right|}[[{\rm ad}x_{i1},D({\rm ad}x_{i2})],{\rm ad}x_{i3}]\\
 &+(-1)^{\left|D\right|(\left|x_{i1}\right|+\left|x_{i2}\right|)}[[{\rm ad}x_{i1},{\rm ad}x_{i2}],D({\rm ad}x_{i3})].
%\end{split}
\end{align*}
Hence, $D({\rm ad}x)\in {\rm ad}(L)$. The lemma holds thanks to $\rm
Lemma~2.2$.
\end{proof}

\begin{lem}\label{lem:2-2}
Suppose that $R$ is the base ring containing $\frac{1}{2}$, $L$ is a
perfect Lie superalgebra with zero center, $D\in {\rm TDer}({\rm
Der}(L)).$ If $D({\rm ad}(L))=0,$ then $D=0.$
\end{lem}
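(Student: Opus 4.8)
The plan is to reduce everything to the already-established identity $TDer(L)=Der(L)$ together with the derivation formula $[\phi,adx]=ad(\phi(x))$, exploiting the hypothesis that $D$ annihilates $ad(L)$ to strip off two of the three terms in the triple-derivation identity. First I would invoke $\rm Lemma~2.7$ to identify $TDer(L)$ with $Der(L)$, so that every $\phi\in TDer(L)$ is a genuine derivation of $L$ to which $\rm Lemma~2.6$ applies; in particular $D$ is an endomorphism of $Der(L)$ and $D(\phi)\in TDer(L)=Der(L)$ for each $\phi$.

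Next, fix a homogeneous $\phi\in Der(L)$ and homogeneous $x,y\in hg(L)$ and form the double bracket $[[\phi,adx],ady]$ inside $TDer(L)$. Using $\rm Lemma~2.6$ twice gives $[\phi,adx]=ad(\phi(x))$ and then $[[\phi,adx],ady]=[ad(\phi(x)),ady]=ad([\phi(x),y])$, so this element lies in $ad(L)$. Since $D$ vanishes on $ad(L)$, the left-hand side $D([[\phi,adx],ady])$ is zero.

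Now I would expand the same quantity by the triple-derivation identity for $D$ on $TDer(L)$: the three resulting terms carry $D$ on $\phi$, on $adx$, and on $ady$ respectively, each with a sign factor. Because $D(adx)=D(ady)=0$, the last two terms drop out and only $[[D(\phi),adx],ady]$ survives. Combining with the previous paragraph yields $[[D(\phi),adx],ady]=0$ for all $x,y$. Writing $\psi=D(\phi)\in Der(L)$ and applying $\rm Lemma~2.6$ once more, $[\psi,adx]=ad(\psi(x))$, so $ad([\psi(x),y])=0$; the injectivity of $ad$ coming from $Z(L)=0$ forces $[\psi(x),y]=0$ for all $y$, whence $\psi(x)\in Z(L)=0$. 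Thus $D(\phi)=\psi=0$ for every homogeneous $\phi$, and by $R$-linearity $D=0$.

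I do not expect a genuine obstacle here: the only care needed is the bookkeeping of the $\mathbf{Z}_{2}$-sign factors in the triple-derivation identity, but these are harmless in this argument, since the two $D(ad\,\cdot)$ terms vanish outright and the surviving bracket is set equal to zero regardless of its sign. The essential ingredients are precisely the identification $TDer(L)=Der(L)$, the ideal property of $ad(L)$, the standing hypothesis $D(ad(L))=0$, and the centerlessness of $L$.
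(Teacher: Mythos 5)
Your proof is correct, and it reaches the conclusion by a route that differs from the paper's in a meaningful way, though both share the same engine: apply the triple-derivation identity of $D$ to a triple bracket in $TDer(L)$ that is known to lie in $ad(L)$, so that the left-hand side vanishes by the hypothesis $D(ad(L))=0$ and two of the three right-hand terms vanish for the same reason. The difference lies in which bracket is chosen and how one finishes. The paper takes $[[ad x_{i1},ad x_{i2}],d]$ with the arbitrary $d\in TDer(L)$ in the \emph{third} slot, which forces it to use perfectness explicitly to write $ad x=\sum[ad x_{i1},ad x_{i2}]$; the surviving term then says $[ad x,D(d)]=0$ for all $x$, i.e. $D(d)\in C_{TDer(L)}(ad(L))$, and the paper finishes with the centralizer lemma, Lemma 2.5 (the citation of Lemma 2.6 at that point in the paper's text is a typo). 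You instead take $[[\phi,ad x],ad y]$ with the arbitrary $\phi$ in the \emph{first} slot, using Lemma 2.7 to know that $\phi$ and $D(\phi)$ are genuine derivations and Lemma 2.6 to see that the bracket equals $ad([\phi(x),y])\in ad(L)$; the surviving term gives $ad([D(\phi)(x),y])=0$, and you finish directly from $Z(L)=0$ and the injectivity of $ad$, never invoking Lemma 2.5 explicitly (it is hidden inside your use of Lemma 2.7). Your route avoids the sum-over-brackets bookkeeping and yields the pointwise conclusion $D(\phi)(x)=0$ immediately, at the cost of leaning on the stronger Lemma 2.7; the paper's route is more self-contained, needing only the ideal property (Lemma 2.2) and the centralizer lemma, and would go through even before one knows $TDer(L)=Der(L)$. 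Since Lemma 2.7 is established before this statement, there is no circularity in your argument, and your sign bookkeeping is indeed harmless for exactly the reason you state.
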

\begin{proof}
For $\forall d\in {\rm Der}(L),~x\in hg(L),$ since $L$ is perfect,
$x=\sum[x_{i1},x_{i2}].$ We have that
\begin{align*}
%\begin{split}
 &[{\rm ad}x,D(d)]=[\sum[{\rm ad}x_{i1},{\rm ad}x_{i2}],D(d)]\\
 &=\sum((-1)^{\left|D\right|\left|x\right|}D([[{\rm ad}x_{i1},{\rm ad}x_{i2}],d])-(-1)^{\left|D\right|\left|x\right|}[[D({\rm ad}x_{i1}),{\rm ad}x_{i2}],d]\\
 &-(-1)^{\left|D\right|\left|x\right|}(-1)^{\left|D\right|\left|x_{i1}\right|}[[{\rm ad}x_{i1},D({\rm ad}x_{i2})],d]).
%\end{split}
\end{align*}
By $\rm Lemma~2.2$, $[{\rm ad}x,d]\in {\rm ad}(L)$, so $D([{\rm
ad}x,d])=0$. Hence, $[{\rm ad}x,D(d)]=0.$ Therefore, $D(d)\in {\rm
C}_{{\rm TDer}(L)}({\rm ad}(L)).$ By $\rm Lemma~ 2.5$, $D(d)=0$.
Hence, $D=0$. The lemma holds.
\end{proof}

\begin{lem}\label{lem:2-3}
Let $L$ is a Lie superalgebra over commutative ring $R$. Suppose
that $\frac{1}{2}\in R$, $L$ is perfect and has zero center. If
$D\in {\rm TDer}({\rm Der}(L))$, then there exists $d\in {\rm
Der}(L)$ such that for $\forall x \in L,~D({\rm ad}x)={\rm
ad}(d(x))$.
\end{lem}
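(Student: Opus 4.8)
The goal is to show that any triple derivation $D$ of $\mathrm{Der}(L)$ restricts to an inner derivation on $ad(L)$: that is, $D(adx) = ad(d(x))$ for some $d \in \mathrm{Der}(L)$. The plan is to use the well-definedness machinery already built in Lemma~2.3 and Lemma~2.4, but now applied to the triple derivation $D$ acting on $\mathrm{Der}(L)$ rather than on $L$ itself.

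First I would recall from Lemma~2.8 that $D(ad(L)) \subseteq ad(L)$, so that $D$ maps inner derivations to inner derivations. Since $L$ is centerless, the map $ad : L \to ad(L)$ is an isomorphism of Lie superalgebras, so $D|_{ad(L)}$ induces a well-defined $R$-module endomorphism $d$ of $L$ via the rule $ad(d(x)) = D(adx)$ for all $x \in hg(L)$. Concretely, writing $x = \sum [x_{i1},x_{i2}]$ using perfectness, one defines
\begin{equation*}
d(x) = ad^{-1}\bigl(D(adx)\bigr),
\end{equation*}
and because $ad$ is injective this is forced and unambiguous. The substance of the proof is then to verify that this $d$ is actually a \emph{derivation} of $L$, not merely an $R$-linear map.

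To check $d \in \mathrm{Der}(L)$, I would compute $ad(d([x,y]))$ two ways. On one side, $ad(d([x,y])) = D(ad[x,y]) = D([adx,ady])$. Now $D$ is a triple derivation of $\mathrm{Der}(L)$, but I want to exploit that $D$ behaves like a genuine derivation on the subalgebra $ad(L)$ when expressing brackets; the cleanest route is to apply the argument of Lemma~2.4 with the roles of $L$ and $ad(L)$ interchanged. That is, since $ad(L)$ is perfect (as $L$ is) and the ambient algebra $\mathrm{Der}(L)$ is centerless by Lemma~2.5, the same reasoning that produced a map $\delta_D \in \mathrm{Der}(\mathrm{Der}(L))$ shows that $D$ restricted to $ad(L)$ satisfies the Leibniz rule on brackets of inner derivations:
\begin{equation*}
D([adx,ady]) = [D(adx),ady] + (-1)^{|D||x|}[adx,D(ady)].
\end{equation*}
Translating through the isomorphism $ad$ and using $[adx,ady] = ad[x,y]$, this becomes exactly
\begin{equation*}
ad(d([x,y])) = ad\bigl([d(x),y] + (-1)^{|D||x|}[x,d(y)]\bigr),
\end{equation*}
and injectivity of $ad$ (centerlessness of $L$) yields the super-Leibniz identity for $d$, so $d \in \mathrm{Der}(L)$.

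The main obstacle is justifying the Leibniz-type formula $D([adx,ady]) = [D(adx),ady] + (-1)^{|D||x|}[adx,D(ady)]$ for $D$ a \emph{triple} derivation on the bracket of two inner derivations. This does not follow from the triple derivation axiom applied to a single triple bracket; instead I expect to run the well-definedness argument of Lemma~2.3 inside $\mathrm{Der}(L)$, producing an auxiliary map $\delta_D$ on $\mathrm{Der}(L)$ with $[D, ad\,\xi] = ad\,\delta_D(\xi)$ for $\xi \in \mathrm{Der}(L)$, and then tracking carefully that on the perfect ideal $ad(L)$ the induced $d$ coincides with $\delta_D$ up to the centralizer obstruction controlled by Lemma~2.9. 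Handling the $\mathbf{Z}_2$-grading signs consistently through the isomorphism $ad$ will require care, but no new idea beyond what Lemmas~2.3--2.5 supply; the invocation of $\tfrac{1}{2} \in R$ enters only through Lemma~2.5 and Lemma~2.9 to guarantee that the relevant centralizers and kernels vanish.
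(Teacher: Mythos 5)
Your construction of $d$ is exactly the paper's: Lemma~2.8 gives $D(ad(L)) \subseteq ad(L)$, and injectivity of $ad$ (from $Z(L)=0$) makes $d$ well defined. The gap lies in how you propose to verify that $d$ is a derivation. You correctly observe that the binary super-Leibniz rule
\begin{equation*}
D([adx,ady]) = [D(adx),ady] + (-1)^{\left|D\right|\left|x\right|}[adx,D(ady)]
\end{equation*}
does not follow from the triple derivation axiom, but your repair does not work as stated: running the Lemma~2.3/2.4 machinery ``inside $\mathrm{Der}(L)$'' to produce a map $\delta_D$ defined on all of $\mathrm{Der}(L)$ with $[D, ad\,\xi] = ad\,\delta_D(\xi)$ requires every element of $\mathrm{Der}(L)$ to be a finite sum of brackets of derivations, i.e.\ it requires $\mathrm{Der}(L)$ itself to be perfect. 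That is neither a hypothesis nor a consequence of $L$ being perfect, and the paper never establishes it, so $\delta_D$ cannot be defined on $\mathrm{Der}(L)$ in this way. The subsequent appeal to Lemma~2.9 also misfires: that lemma concludes $D=0$ from the hypothesis $D(ad(L))=0$, which is not the situation here, so it cannot ``control'' the discrepancy between $D$ and an auxiliary map on $ad(L)$; in the paper, Lemma~2.9 is used only afterwards, in the proof of Theorem~1.1(2), not inside this lemma.

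The idea you are missing is that the binary Leibniz rule is unnecessary: the paper applies the triple derivation identity of $D$ to $ad([[x_1,x_2],x_3]) = [[adx_1,adx_2],adx_3]$, substitutes $D(adx_i) = ad(d(x_i))$, and pulls back through the injective map $ad$ to get the \emph{triple} derivation identity for $d$, i.e.\ $d \in TDer(L)$; then Lemma~2.7 ($TDer(L)=Der(L)$, valid since $L$ is perfect, centerless, and $\tfrac{1}{2}\in R$) upgrades $d$ to a derivation. If you prefer your route, it can be repaired cleanly: by Lemma~2.8 the restriction $D|_{ad(L)}$ satisfies the triple derivation identity on the subalgebra $ad(L)$, and $ad(L) \cong L$ is perfect and centerless, so Lemma~2.7 applied to the Lie superalgebra $ad(L)$ gives $D|_{ad(L)} \in Der(ad(L))$ --- precisely your Leibniz formula. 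Either way, the real work is delegated to Lemma~2.7, not to Lemmas~2.3--2.5 and 2.9 as your sketch suggests.
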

\begin{proof}
For $\forall D\in {\rm TDer}({\rm Der}(L)),~x\in L,$ by $\rm
Lemma~2.8,$ $D({\rm ad}x)\in {\rm ad}(L)$. Let $y\in L$ and $D({\rm
ad}x)={\rm ad}y$. Since the center ${\rm Z}(L)$ is trivial, such $y$
is unique. Clearly, the map $d:L\rightarrow L$ given by $d(x)=y$ is
a $R$-module endomorphism of $L$. Let $x_{1},x_{2}\in hg(L),x_{3}\in
L$. We have
\begin{align*}
%\begin{split}
 &{\rm ad}d([[x_{1},x_{2}],x_{3}])=D({\rm ad}([[x_{1},x_{2}],x_{3}]))\\
&=D([[{\rm ad}x_{1},{\rm ad}x_{2}],{\rm ad}x_{3}])\\
 &=[[D({\rm ad}x_{1}),{\rm ad}x_{2}],{\rm ad}x_{3}]+(-1)^{\left|D\right|\left|x_{1}\right|}[[{\rm ad}x_{1},D({\rm ad}x_{2})],{\rm ad}x_{3}]\\
 &+(-1)^{\left|D\right|(\left|x_{1}\right|+\left|x_{2}\right|)}[[{\rm ad}x_{1},{\rm ad}x_{2}],D({\rm ad}x_{3})].\\
 &=[[{\rm ad}(d(x_{1})),{\rm ad}x_{2}],{\rm ad}x_{3}]+(-1)^{\left|D\right|\left|x_{1}\right|}[[{\rm ad}x_{1},{\rm ad}(d(x_{2}))],{\rm ad}x_{3}]\\
 &+(-1)^{\left|D\right|(\left|x_{1}\right|+\left|x_{2}\right|)}[[{\rm ad}x_{1},{\rm ad}x_{2}],{\rm ad}(d(x_{3}))]\\
 &={\rm ad}([[d(x_{1}),x_{2}],x_{3}]+(-1)^{\left|D\right|\left|x_{1}\right|}[[x_{1},d(x_{2})],x_{3}]+(-1)^{\left|D\right|(\left|x_{1}\right|+\left|x_{2}\right|)}[[x_{1},x_{2}],d(x_{3})]).
%\end{split}
\end{align*}
Since ${\rm Z}(L)=0$,
\begin{align*}
%\begin{split}
 &d([[x_{1},x_{2}],x_{3}])=[[d(x_{1}),x_{2}],x_{3}]+(-1)^{\left|D\right|\left|x_{1}\right|}[[x_{1},d(x_{2})],x_{3}]+(-1)^{\left|D\right|(\left|x_{1}\right|+\left|x_{2}\right|)}[[x_{1},x_{2}],d(x_{3})].
%\end{split}
\end{align*}
That is to say, $d\in {\rm TDer}(L)$. By $\rm Lemma~2.7$, $d\in {\rm
Der}(L)$.
\end{proof}
{\bf Proof of $\rm\bf Theorem~1.1$}. By $\rm Lemma~2.7$, it remains
only to prove the second assertion. By $\rm Lemma~2.10$, for
$\forall D\in {\rm TDer}({\rm Der}(L)),~x\in L$, there exists $d\in
{\rm Der}(L)$ such that for $\forall x\in L,~D({\rm ad}x)= {\rm
ad}(d(x))$. Thanks to $\rm Lemma~2.6$, ${\rm ad}(d(x))=[d,{\rm
ad}x]$. Hence,
\begin{align*}
%\begin{split}
 &D({\rm ad}x)= {\rm ad}(d(x))=[d,{\rm ad}x]={\rm ad}(d)({\rm ad}x).
%\end{split}
\end{align*}
Thus,
\begin{align*}
%\begin{split}
 &(D-{\rm ad}(d))({\rm ad}x)=0.
%\end{split}
\end{align*}
By $\rm Lemma~2.9$, $D={\rm ad}(d)$. Therefore, ${\rm TDer}({\rm
Der}(L))={\rm ad}({\rm Der}(L))$. The theorem holds.

\begin{re}{\rm\cite{Z1}}
The condition $\frac{1}{2}$ is necessary. For example, if the base
ring is field $F$ of characteristic $2$ and $L$ is not abelian, then
the identity map is a triple derivation but not a derivation.
\end{re}

\section{Triple Homomorphisms of Perfect Lie Superalgebras}

Throughout this section, $L$ and $L^{'}$ are Lie
superalgebras over $R$, $f$ is a triple homomorphism from $L$ to
$L^{'}$, and $M$ is the enveloping Lie superalgebra of $f(L)$. We
always assume that $L$ is perfect and that $M$ is centerless and can
be decomposed into a direct sum of indecomposable ideals. We proceed
to prove the theorem by a series of lemmas.

\begin{lem}\label{lem:1}
There exists an even $R$-linear mapping $\delta_{f}: L \rightarrow
L^{'}$ such that for $\forall x\in hg(L)$ with $x=\sum_{i\in I\atop
\left|x_{i1}\right|+\left|x_{i2}\right|=\left|x\right|}[x_{i1},x_{i2}]~(x_{i1},x_{i2}\in
L),~\delta_{f}(x)=\sum_{i\in I\atop
\left|x_{i1}\right|+\left|x_{i2}\right|=\left|x\right|}[f(x_{i1}),f(x_{i2})]$.
\end{lem}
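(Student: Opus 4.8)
The plan is to show that the proposed formula for $\delta_f$ yields a well-defined $R$-linear map, i.e.\ that its value does not depend on how the perfect element $x$ is written as a finite sum of brackets. The structure of the argument should mirror the well-definedness proof of $\delta_D$ in Lemma~2.3: there the key device was to bracket the candidate value against an arbitrary $z$, use the (triple) derivation identity to re-express both candidate expressions as a single quantity depending only on $x$, and then invoke zero center to cancel. Here the analogous device is to bracket $\sum[f(x_{i1}),f(x_{i2})]$ with $f(z)$ for arbitrary $z\in L$ and use the triple homomorphism property $f([x,[y,z]])=[f(x),[f(y),f(z)]]$ to collapse the sum.

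Concretely, first I would fix $x\in hg(L)$ with two presentations $x=\sum_{i}[x_{i1},x_{i2}]=\sum_{j}[y_{j1},y_{j2}]$ and set
\begin{equation*}
\alpha=\sum_{i}[f(x_{i1}),f(x_{i2})],\qquad \beta=\sum_{j}[f(y_{j1}),f(y_{j2})].
\end{equation*}
For an arbitrary $z\in L$ I would compute $[\alpha,f(z)]$. Using graded skew-symmetry to write $[[x_{i1},x_{i2}],z]$ in the form $\pm[z,[x_{i1},x_{i2}]]=\pm[x_{i1},[x_{i2},z]]\mp\dots$, or more directly applying the triple homomorphism identity term by term, the sum $\sum_i[[f(x_{i1}),f(x_{i2})],f(z)]$ should equal $f([x,z])$ (up to a uniform sign coming from the graded bracket), a quantity depending on $x$ and $z$ alone and not on the chosen presentation. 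The same computation applied to $\beta$ gives the identical value $f([x,z])$, so $[\alpha-\beta,f(z)]=0$ for all $z$.

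From $[\alpha-\beta,f(z)]=0$ for every $z\in L$ I would conclude $\alpha-\beta$ centralizes $f(L)$, hence centralizes the Lie subalgebra it generates, namely $M$. Since $M$ is centerless by the standing assumption of this section, and $\alpha,\beta\in M$ (each is a bracket of elements of $f(L)$, hence lies in the enveloping algebra $M$), we get $\alpha-\beta\in Z(M)=0$, so $\alpha=\beta$. This establishes independence of the presentation; $R$-linearity and the fact that $\delta_f$ respects the $\mathbf Z_2$-grading then follow because concatenating presentations of $x$ and $x'$ (and scaling) produces a presentation of $x+x'$ (and $rx$), and the formula is additive in those presentations.

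The main obstacle I anticipate is purely bookkeeping rather than conceptual: getting the triple homomorphism identity to apply cleanly to $[[f(x_{i1}),f(x_{i2})],f(z)]$. The given identity is stated as $f([x,[y,z]])=[f(x),[f(y),f(z)]]$, whose right-hand side has the \emph{inner} bracket on the last two arguments, whereas the expression $[[f(x_{i1}),f(x_{i2})],f(z)]$ has the \emph{outer} bracket first; reconciling the two requires applying graded skew-symmetry and the graded Jacobi identity (Definition~1.1) to reorganize $[[a,b],c]$ into a combination of terms of the form $[a,[b,c]]$, tracking the sign factors $(-1)^{|x_{i1}||x_{i2}|}$ carefully and confirming that the transported relation in $M$ matches $f$ applied to the corresponding reorganization in $L$. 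Once the signs are verified to agree on both sides, the centralizer-plus-centerless argument finishes the proof.
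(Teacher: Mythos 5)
Your proposal is correct and follows essentially the same route as the paper: fix two presentations of $x$, form $\alpha$ and $\beta$, bracket against $f(z)$, collapse via the triple homomorphism identity, and conclude $\alpha-\beta\in Z(M)=0$. The one difference is that the paper brackets on the \emph{left}, computing $[f(z),\alpha-\beta]$, so the identity $f([z,[x_{i1},x_{i2}]])=[f(z),[f(x_{i1}),f(x_{i2})]]$ applies verbatim and the sign/reorganization bookkeeping you flag as the main obstacle never arises.
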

\begin{proof}
It is sufficient to prove that $\sum[f(x_{i1}),f(x_{i2})]$ is
independent of the expression of $x$. Suppose that
$x=\sum[x_{i1},x_{i2}]=\sum[y_{j1},y_{j2}].$

Let
\begin{equation*}
\begin{split}
&\alpha=\sum[f(x_{i1}),f(x_{i2})],~\beta=\sum[f(y_{j1}),f(y_{j2})].
\end{split}
\end{equation*}
For $\forall z\in L$, we have that
\begin{equation*}
\begin{split}
&[f(z),\alpha-\beta]=[f(z),\sum[f(x_{i1}),f(x_{i2})]-\sum[f(y_{j1}),f(y_{j2})]]\\
&=\sum[f(z),[f(x_{i1}),f(x_{i2})]]-\sum[f(z),[f(y_{j1}),f(y_{j2})]]\\
&=f([z,\sum[x_{i1},x_{i2}]]-[z,\sum[y_{j1},y_{j2}]])\\
&=f([z,x]-[z,x])=0.\\
\end{split}
\end{equation*}
It follows $\alpha-\beta\in {\rm Z}(M)$ and thus $\alpha=\beta$
since $M$ is centerless. This completes the proof.
\end{proof}

\begin{lem}\label{lem:1}
Let $\delta_{f}$ be the mapping in $\rm Lemma~3.1$. Then, for
$\forall x\in L$, we have that $f{\rm ad}x={\rm ad}
{\delta_{f}(x)}f.$
\end{lem}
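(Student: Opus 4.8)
The goal is to show $f\,\mathrm{ad}_x = \mathrm{ad}_{\delta_f(x)}\,f$ as maps $L\to L'$, where $\delta_f$ is the well-defined linear map constructed in Lemma 3.1. Since both sides are $R$-linear in $x$ and $L$ is perfect, it suffices to verify the identity on homogeneous $x$, and in fact on a single bracket $x=[x_1,x_2]$; the general case $x=\sum[x_{i1},x_{i2}]$ then follows by summing. So the plan is to fix homogeneous $x_1,x_2\in L$, set $x=[x_1,x_2]$, and evaluate both sides of the claimed equation at an arbitrary $z\in L$.

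The left-hand side applied to $z$ is $f([x,z]) = f([[x_1,x_2],z])$. Here I would use that $f$ is a triple homomorphism in the form $f([x_1,[x_2,z]]) = [f(x_1),[f(x_2),f(z)]]$, after first rewriting $[[x_1,x_2],z]$ using the graded Jacobi identity (Definition 1.1(2)) as a combination of terms of the shape $[\,\cdot\,,[\,\cdot\,,\cdot\,]]$ so that the triple-homomorphism property can be applied directly. The right-hand side applied to $z$ is $\mathrm{ad}_{\delta_f(x)}(f(z)) = [\delta_f(x),f(z)] = [[f(x_1),f(x_2)],f(z)]$ by the defining formula for $\delta_f$ from Lemma 3.1. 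The main task is therefore to reconcile $f([[x_1,x_2],z])$ with $[[f(x_1),f(x_2)],f(z)]$, and the bookkeeping consists in matching the signs $(-1)^{|x_i||\cdot|}$ produced on each side when the Jacobi identity is invoked in $L$ versus in $M$.

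Concretely, I expect the cleanest route to be: apply graded Jacobi in $L$ to split $f([[x_1,x_2],z])$ into terms $f([x_1,[x_2,z]])$ and $\pm f([x_2,[x_1,z]])$ (or $f([x_1,[z,\cdot]])$-type pieces), convert each through the triple-homomorphism identity into brackets of $f$-images in $M$, and then reassemble those using graded Jacobi in $M$ into $[[f(x_1),f(x_2)],f(z)]$. Because the grading of $f$-images matches the grading of the originals (namely $|f(x_i)|=|x_i|$, as $\delta_f$ and $f$ are the relevant graded-linear maps), the sign factors on the two sides correspond term by term, and the two Jacobi expansions are mirror images of one another.

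The main obstacle is bookkeeping the $\mathbf{Z}_2$-signs so that the two applications of the graded Jacobi identity — one in $L$ before applying $f$, one in $M$ after — cancel exactly, leaving precisely $[[f(x_1),f(x_2)],f(z)]$ with the correct sign. A subtlety to watch is that the triple-homomorphism identity is stated for $f([x,[y,z]])$ with the \emph{outer} slot on the left, whereas $\mathrm{ad}_x(z)=[x,z]$ places $x$ on the left of the \emph{inner} bracket, so one must orient the Jacobi expansion to land each summand in exactly the slot pattern $[\,\cdot\,,[\,\cdot\,,\cdot\,]]$ that the hypothesis requires before converting to $M$. Once the identity is established for $z$ arbitrary and $x=[x_1,x_2]$, linearity and perfectness of $L$ upgrade it to all $x\in L$, and equality of the endomorphisms $f\,\mathrm{ad}_x$ and $\mathrm{ad}_{\delta_f(x)}\,f$ follows from the arbitrariness of $z$.
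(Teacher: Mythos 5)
Your proposal is correct and takes essentially the same route as the paper: evaluate both maps at an arbitrary $z\in L$, expand $x=\sum[x_{i1},x_{i2}]$ using perfectness, and use the triple-homomorphism property to identify each $f([[x_{i1},x_{i2}],z])$ with $[[f(x_{i1}),f(x_{i2})],f(z)]$, which sums to $[\delta_{f}(x),f(z)]=ad_{\delta_{f}(x)}f(z)$. The only difference is that you explicitly justify the slot-pattern reorientation (via graded Jacobi expansions in $L$ and in $M$, using $|f(x_i)|=|x_i|$) that the paper applies silently in the single step $\sum f([[x_{i1},x_{i2}],z])=\sum[[f(x_{i1}),f(x_{i2})],f(z)]$ --- a harmless and in fact welcome refinement.
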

\begin{proof}
Let $x=\sum[x_{i1},x_{i2}]\in L$. For $\forall z\in L$, we have
\begin{align*}
%\begin{split}
 &f{\rm ad}{x}(z)=f([x,z])=\sum f([[x_{i1},x_{i2}],z])\\
&=\sum[[f(x_{i1}),f(x_{i2})],f(z)]=[\sum[f(x_{i1}),f(x_{i2})],f(z)]\\
 &=[\delta_{f}(x),f(z)]={\rm ad}{\delta_{f}(x)}f(z).
%\end{split}
\end{align*}
Thus $f{\rm ad}x={\rm ad}{\delta_{f}(x)}f$, and the lemma holds.
\end{proof}

\begin{lem}\label{lem:1-2}
The mapping $\delta_{f}$ is a homomorphism of Lie superalgebras.
\end{lem}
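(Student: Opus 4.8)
The plan is to prove the homogeneous identity $\delta_{f}([x,y]) = [\delta_{f}(x),\delta_{f}(y)]$ for all $x,y \in hg(L)$; the general case then follows at once, since $\delta_{f}$ is $R$-linear by Lemma 3.1 and the bracket is $R$-bilinear, so both sides are bi-additive in $(x,y)$ and it suffices to test homogeneous arguments. Fixing homogeneous $x,y$, I would set
$$u = \delta_{f}([x,y]) - [\delta_{f}(x),\delta_{f}(y)] \in M.$$
Rather than attacking $u$ head-on, I would show that $u$ lies in the center of $M$ and then invoke the standing hypothesis $Z(M)=0$. Because $M$ is the enveloping Lie superalgebra of $f(L)$, it is generated by $f(L)$, and because the centralizer of a single element is a sub-superalgebra (a one-line consequence of the graded Jacobi identity), it is enough to verify that $u$ commutes with every generator $f(z)$, $z\in L$.

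The heart of the argument is thus the identity $[u,f(z)]=0$, which I would obtain by computing the two pieces separately and matching them. For the first, Lemma 3.2 applied with first argument $[x,y]\in L$ gives directly
$$[\delta_{f}([x,y]),f(z)] = f([[x,y],z]).$$
For the second, I would expand via the graded Jacobi identity,
$$[[\delta_{f}(x),\delta_{f}(y)],f(z)] = [\delta_{f}(x),[\delta_{f}(y),f(z)]] - (-1)^{|x||y|}[\delta_{f}(y),[\delta_{f}(x),f(z)]],$$
and collapse each inner bracket using Lemma 3.2 twice: $[\delta_{f}(y),f(z)] = f([y,z])$ and then $[\delta_{f}(x),f([y,z])] = f([x,[y,z]])$, with the symmetric computation for the other summand. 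This rewrites the right-hand side as $f\bigl([x,[y,z]] - (-1)^{|x||y|}[y,[x,z]]\bigr)$, which equals $f([[x,y],z])$ by the graded Jacobi identity in $L$. Hence the two computations agree and $[u,f(z)]=0$.

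With $[u,f(z)]=0$ for every $z$, the centralizer $\{m\in M : [u,m]=0\}$ is a sub-superalgebra containing the generating set $f(L)$, hence equals $M$, so $u\in Z(M)=0$; this yields $\delta_{f}([x,y]) = [\delta_{f}(x),\delta_{f}(y)]$, as required. The step I expect to require the most care is the sign bookkeeping in the graded Jacobi expansion of the second computation: one must track the factors $(-1)^{|x||y|}$ consistently, using $|\delta_{f}(x)|=|x|$ and $|f(z)|=|z|$, so that the two invocations of the graded Jacobi identity (once in $M$, once in $L$) carry matching signs. The only genuinely new ingredient beyond Lemmas 3.1 and 3.2 is the observation that commuting with the generators $f(L)$ already forces membership in $Z(M)$, which is precisely where the hypothesis that $M$ is centerless is used.
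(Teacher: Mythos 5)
Your proposal is correct and follows essentially the same route as the paper: expand $[\delta_{f}([x,y])-[\delta_{f}(x),\delta_{f}(y)],f(z)]$ using the graded Jacobi identity and Lemma 3.2, observe it vanishes for all $z$, and conclude via $Z(M)=0$ that the difference is zero. The only cosmetic differences are that you invoke Lemma 3.2 directly for the term $[\delta_{f}([x,y]),f(z)]$ (the paper uses Lemma 3.1 plus the triple homomorphism property) and that you spell out why commuting with the generating set $f(L)$ forces membership in $Z(M)$, a step the paper leaves implicit.
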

\begin{proof}
For $\forall x,y\in hg(L),z\in L$, it follows from $\rm Lemmas~3.1$
and $\rm 3.2$ that
\begin{equation*}
\begin{split}
&[\delta_{f}([x,y])-[\delta_{f}(x),\delta_{f}(y)],f(z)]\\
&=[\delta_{f}([x,y]),f(z)]-[\delta_{f}(x),[\delta_{f}(y),f(z)]]+(-1)^{\left|x\right|\left|y\right|}[\delta_{f}(y),[\delta_{f}(x),f(z)]]\\
&=[[f(x),f(y)],f(z)]-[\delta_{f}(x),{\rm ad}{\delta_{f}(y)}f(z)]]+(-1)^{\left|x\right|\left|y\right|}[\delta_{f}(y),{\rm ad}{\delta_{f}(x)}f(z)]\\
&=[[f(x),f(y)],f(z)]-[\delta_{f}(x),f([y,z])]+(-1)^{\left|x\right|\left|y\right|}[\delta_{f}(y),f([x,z])]\\
&=[[f(x),f(y)],f(z)]-{\rm ad}{\delta_{f}(x)}f([y,z])+(-1)^{\left|x\right|\left|y\right|}{\rm ad}{\delta_{f}(y)}f([x,z])\\
&=f([[x,y],z])-f([x,[y,z]])+(-1)^{\left|x\right|\left|y\right|}f([y,[x,z]])\\
&=f([[x,y],z]-[x,[y,z]]+(-1)^{\left|x\right|\left|y\right|}[y,[x,z]])\\
&=0.
\end{split}
\end{equation*}
The last equality is due to the Jacobi identity. Since $M$ is the
enveloping Lie superalgebra of $f(L)$ and $z$ is arbitrary in $L$,
$\delta_{f}([x,y])-[\delta_{f}(x),\delta_{f}(y)]\in {\rm Z}(M)$.
Hence, $\delta_{f}([x,y])=[\delta_{f}(x),\delta_{f}(y)]$ since $M$
is centerless. Therefore, the lemma follows from the arbitrariness
of $x,y\in L$.
\end{proof}

\begin{lem}\label{lem:1-3}
Denote $M^{+}={\rm Im}(f+\delta_{f}),~M^{-}={\rm Im}(f-\delta_{f})$.
Then, $M^{+}$ and $M^{-}$ are both ideals of $M$.
\end{lem}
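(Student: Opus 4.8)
The plan is to reduce everything to four bracket identities among the maps $f$ and $\delta_f$, and then read off the ideal property from a decomposition $M = M^{+} + M^{-}$ together with $[M^{+},M^{-}]=0$. Write $g = f+\delta_f$ and $h = f-\delta_f$, so that $M^{+}=\operatorname{Im} g$ and $M^{-}=\operatorname{Im} h$. Since $g$ and $h$ are $R$-linear, both images are $R$-submodules of $L'$, and they lie in $M$ because $f(L)\subseteq M$ while $\delta_f(x)=\sum[f(x_{i1}),f(x_{i2})]\in[f(L),f(L)]\subseteq M$.

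First I would collect the four ``structure constants''. Three are already available: $\rm Lemma~3.2$ gives $[\delta_f(z),f(x)]=f([z,x])$, $\rm Lemma~3.3$ gives $[\delta_f(z),\delta_f(x)]=\delta_f([z,x])$, and expanding $\delta_f(x)=\sum[f(x_{i1}),f(x_{i2})]$ and applying the triple-homomorphism identity termwise yields $[f(z),\delta_f(x)]=\sum f([z,[x_{i1},x_{i2}]])=f([z,x])$. The fourth and crucial identity is $[f(z),f(x)]=\delta_f([z,x])$ for homogeneous $z,x$. I expect this to be the main obstacle, since a direct computation of $[f(z),f(x)]$ offers no evident handle; the point is that it is not needed. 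The single-bracket expression $[z,x]=[z,x]$ is itself an admissible way of writing the element $[z,x]$ as a sum of brackets (with $|z|+|x|=|[z,x]|$), so the well-definedness established in $\rm Lemma~3.1$ forces $\delta_f([z,x])=[f(z),f(x)]$ immediately. This observation is what makes the rest of the argument routine.

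With these four identities in hand I would expand the four brackets of $g$ and $h$ on homogeneous $z,x$. Substituting gives $[g(z),g(x)]=2g([z,x])$ and $[h(z),h(x)]=-2h([z,x])$, while $[g(z),h(x)]=[h(z),g(x)]=0$; all cross terms cancel because $[f(z),\delta_f(x)]$ and $[\delta_f(z),f(x)]$ both equal $f([z,x])$, whereas $[f(z),f(x)]$ and $[\delta_f(z),\delta_f(x)]$ both equal $\delta_f([z,x])$. By bilinearity of the bracket, and since every element of $M^{+}$ (resp. $M^{-}$) has the form $g(z)$ (resp. $h(x)$), these relations give $[M^{+},M^{+}]\subseteq M^{+}$, $[M^{-},M^{-}]\subseteq M^{-}$, and $[M^{+},M^{-}]=0$.

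Finally I would assemble the ideal property. Since $2$ is invertible, $f(x)=\tfrac12\bigl(g(x)+h(x)\bigr)$, so $f(L)\subseteq M^{+}+M^{-}$; as $M^{+}+M^{-}$ is closed under the bracket by the inclusions above, it is a subalgebra containing $f(L)$, whence $M=\langle f(L)\rangle\subseteq M^{+}+M^{-}$, and in fact $M=M^{+}+M^{-}$. Then $[M,M^{+}]=[M^{+},M^{+}]+[M^{-},M^{+}]\subseteq M^{+}+0=M^{+}$, so $M^{+}$ is an ideal, and the argument for $M^{-}$ is identical. A minor point to verify en route is that $g$ and $h$ carry homogeneous elements to homogeneous elements of the same degree, so that $g([z,x])$ and $h([z,x])$ genuinely lie in $M^{+}$ and $M^{-}$; this is immediate from the degree-preservation of $f$ and the defining formula for $\delta_f$.
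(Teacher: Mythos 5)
Your proof is correct, but it cannot be measured against the paper's own argument, because the paper gives none: its entire proof of this lemma is the sentence ``Similar with \cite{Z2}'', deferring to the Lie-algebra case. Your argument fills that gap, and it is sound. The step you single out as crucial, $\delta_f([z,x])=[f(z),f(x)]$ for homogeneous $z,x$, is indeed legitimate and costs nothing: the single bracket $[z,x]$ is an admissible expression of the element $[z,x]$ in the sense of Lemma 3.1, so well-definedness yields the identity at once; in fact the paper uses this silently elsewhere, e.g.\ in the proof of Lemma 3.3, where $[\delta_f([x,y]),f(z)]$ is replaced by $[[f(x),f(y)],f(z)]$. Given the four identities, your bracket table $[g(z),g(x)]=2g([z,x])$, $[h(z),h(x)]=-2h([z,x])$, $[g(z),h(x)]=[h(z),g(x)]=0$ is a correct computation (only bilinearity is invoked, so no super-signs can go astray), and the deduction $M=M^{+}+M^{-}$, hence $[M,M^{\pm}]\subseteq M^{\pm}$, is valid. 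Two comparative remarks. First, your route consumes the hypothesis $\tfrac12\in R$ in order to write $f=\tfrac12(g+h)$; this is harmless, since it is a standing assumption of Theorem 1.2, but the argument presumably intended via \cite{Z2} avoids it: from $[f(z),g(x)]=g([z,x])$ and $[f(z),h(x)]=-h([z,x])$ one gets $[f(L),M^{\pm}]\subseteq M^{\pm}$, and since the elements $m\in M$ with $[m,M^{\pm}]\subseteq M^{\pm}$ form a subalgebra (by the graded Jacobi identity) containing the generating set $f(L)$, they exhaust $M$. Second, your identity $[g(z),h(x)]=0$ reproves the paper's Lemma 3.5 ($[M^{+},M^{-}]=0$) as a byproduct, a small economy over the paper's separate computation there. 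Your closing point about gradedness of $\operatorname{Im}g$ and $\operatorname{Im}h$ is fine under the paper's implicit convention that $f$ (hence $\delta_f$, $g$, $h$) is an even map.
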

\begin{proof}
Similar with{\rm\cite{Z2}}.
\end{proof}

\begin{lem}\label{lem:2}
$[M^{+},M^{-}]=0.$
\end{lem}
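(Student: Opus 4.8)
The plan is to verify $[a,b]=0$ directly on homogeneous generators $a=(f+\delta_f)(x)$ and $b=(f-\delta_f)(y)$ with $x,y\in hg(L)$. Since $M^{+}$ and $M^{-}$ are the images of the $R$-linear maps $f+\delta_f$ and $f-\delta_f$, every element of $M^{\pm}$ is of this form, and by bilinearity of the bracket it suffices to treat homogeneous arguments (note that $f$ and $\delta_f$ are even, so $f(x)\pm\delta_f(x)$ is homogeneous of degree $|x|$). Expanding by bilinearity,
\[
[(f+\delta_f)(x),(f-\delta_f)(y)]=[f(x),f(y)]-[f(x),\delta_f(y)]+[\delta_f(x),f(y)]-[\delta_f(x),\delta_f(y)],
\]
so the task reduces to evaluating these four brackets using the identities already established in Lemmas~3.1--3.3.

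First I would record the two ``mixed'' brackets. Lemma~3.2, which asserts $fad_{x}=ad_{\delta_{f}(x)}f$, gives immediately $[\delta_f(x),f(y)]=f([x,y])$. For the other mixed term I would combine graded skew-symmetry with Lemma~3.2: $[f(x),\delta_f(y)]=-(-1)^{|x||y|}[\delta_f(y),f(x)]=-(-1)^{|x||y|}f([y,x])$, and since $[y,x]=-(-1)^{|x||y|}[x,y]$ in $L$, the two factors of $(-1)^{|x||y|}$ cancel and this collapses to $[f(x),\delta_f(y)]=f([x,y])$ as well. Hence the two cross terms cancel against one another.

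Next I would handle the two ``pure'' brackets. Applying the defining formula of $\delta_f$ from Lemma~3.1 to the single-summand expression $[x,y]$ yields $\delta_f([x,y])=[f(x),f(y)]$; on the other hand Lemma~3.3 states that $\delta_f$ is a homomorphism, so $\delta_f([x,y])=[\delta_f(x),\delta_f(y)]$. Comparing the two gives $[f(x),f(y)]=[\delta_f(x),\delta_f(y)]$, whence $[f(x),f(y)]-[\delta_f(x),\delta_f(y)]=0$. Substituting both observations into the expansion above produces $[(f+\delta_f)(x),(f-\delta_f)(y)]=0$, and therefore $[M^{+},M^{-}]=0$.

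The argument is short once the three auxiliary identities are in place, so I expect the only delicate point to be the sign bookkeeping in the cross term $[f(x),\delta_f(y)]$, where graded skew-symmetry must be applied in both $M$ and $L$ and the cancellation of the $(-1)^{|x||y|}$ factors checked carefully. No invertibility of $2$ is needed here; the centerlessness of $M$ enters only indirectly, through the well-definedness of $\delta_f$ and the conclusions of Lemmas~3.1--3.3.
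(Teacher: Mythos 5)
Your proof is correct, but it takes a genuinely different route from the paper's. The paper never evaluates the bracket $[(f+\delta_f)(x),(f-\delta_f)(y)]$ directly: instead it brackets this element with an arbitrary generator $f(z)$, expands $[[(f+\delta_f)(x),(f-\delta_f)(y)],f(z)]$ using the triple homomorphism identity (which gives $[[f(x),f(y)],f(z)]=f([[x,y],z])$), Lemma~3.2, and the graded Jacobi identity (to break up $[[\delta_f(x),\delta_f(y)],f(z)]$, so Lemma~3.3 is never invoked), collapses the whole sum to $f$ applied to a Jacobi expression, and then concludes that $[(f+\delta_f)(x),(f-\delta_f)(y)]$ centralizes $f(L)$, hence lies in $Z(M)=0$. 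You, by contrast, evaluate the bracket itself: Lemma~3.2 plus graded skew-symmetry shows both mixed terms equal $f([x,y])$ and cancel, while the defining formula of $\delta_f$ from Lemma~3.1 (applied to the one-term expression of $[x,y]$) combined with Lemma~3.3 gives $[f(x),f(y)]=\delta_f([x,y])=[\delta_f(x),\delta_f(y)]$, killing the pure terms. Your argument is shorter and purely formal once Lemmas~3.1--3.3 are in place, and it localizes the use of centerlessness entirely inside those lemmas, whereas the paper re-runs the ``bracket with $f(z)$ and quotient by $Z(M)$'' technique but stays independent of Lemma~3.3. One caveat: your sign bookkeeping, and the claim that $f(x)\pm\delta_f(x)$ is homogeneous of degree $|x|$, rest on the standing assumption that $f$ is an even map; the paper makes the same assumption silently in its own sign computations (e.g.\ when it writes $[f(x),\delta_f(y)]=-(-1)^{|x||y|}[\delta_f(y),f(x)]$), so this is consistent with the paper rather than a gap.
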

\begin{proof}
Take $x,y,z\in hg(L)$, we have that

\begin{equation*}
\begin{split}
&[[f(x)+\delta_{f}(x),f(y)-\delta_{f}(y)],f(z)]\\
&=[[f(x),f(y)],f(z)]-[[f(x),\delta_{f}(y)],f(z)]\\
&+[[\delta_{f}(x),f(y)],f(z)]-[[\delta_{f}(x),\delta_{f}(y)],f(z)]\\
&=f([[x,y],z])+(-1)^{\left|x\right|\left|y\right|}[{\rm ad}{\delta_{f}(y)}f(x),f(z)]+[{\rm ad}{\delta_{f}(x)}f(y),f(z)]\\
&-[\delta_{f}(x),[\delta_{f}(y),f(z)]]+(-1)^{\left|x\right|\left|y\right|}[\delta_{f}(y),{\rm ad}{\delta_{f}(x)}f(z)]\\
&=f([[x,y],z])+(-1)^{\left|x\right|\left|y\right|}[f([y,x]),f(z)]+[f([x,y]),f(z)]\\
&-[\delta_{f}(x),{\rm ad}{\delta_{f}(y)}f(z)]+(-1)^{\left|x\right|\left|y\right|}[\delta_{f}(y),f([x,z])]\\
&=f([[x,y],z])+(-1)^{\left|x\right|\left|y\right|}[f([y,x]),f(z)]+[f([x,y]),f(z)]\\
&-[\delta_{f}(x),f([y,z])]-(-1)^{\left|x\right|\left|y\right|}(-1)^{\left|y\right|(\left|x\right|+\left|z\right|)}[f([x,z]),\delta_{f}(y)]\\
&=f([[x,y],z])-[\delta_{f}(x),f([y,z])]-(-1)^{\left|y\right|\left|z\right|}[f([x,z]),\delta_{f}(y)]\\
&=f([[x,y],z])-f([x,[y,z]])+(-1)^{\left|y\right|\left|z\right|}(-1)^{\left|y\right|(\left|x\right|+\left|z\right|)}f([y,[x,z]])\\
&=f([[x,y],z]-[x,[y,z]]+(-1)^{\left|x\right|\left|y\right|}[y,[x,z]])=0.
\end{split}
\end{equation*}
Therefore, $[f(x)+\delta_{f}(x),f(y)-\delta_{f}(y)]\in {\rm Z}(M)$.
Since ${\rm Z}(M)=0$, we have that
$[f(x)+\delta_{f}(x),f(y)-\delta_{f}(y)]=0$. The lemma follows.
\end{proof}

\begin{lem}\label{lem:2}
$M^{+}\cap M^{-}=0.$
\end{lem}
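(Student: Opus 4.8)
The plan is to first establish the decomposition $M = M^{+} + M^{-}$, and then to exploit the vanishing bracket $[M^{+},M^{-}]=0$ from Lemma 3.5 together with the centerlessness of $M$ to force every common element to be zero.

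First I would observe that, since $2$ is invertible in $R$, each generator $f(x)$ of $M$ splits as
\[
f(x) = \tfrac{1}{2}(f+\delta_{f})(x) + \tfrac{1}{2}(f-\delta_{f})(x) \in M^{+} + M^{-},
\]
so that $f(L)\subseteq M^{+}+M^{-}$. By Lemma 3.4 both $M^{+}$ and $M^{-}$ are ideals of $M$, hence their sum $M^{+}+M^{-}$ is in particular a Lie subalgebra of $M$ containing $f(L)$. Since $M$ is by definition the enveloping Lie superalgebra of $f(L)$, that is, the Lie subalgebra generated by $f(L)$, and since plainly $M^{+}+M^{-}\subseteq M$, we conclude $M = M^{+}+M^{-}$.

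Next I would take an arbitrary $w \in M^{+}\cap M^{-}$. Because $w\in M^{-}$, Lemma 3.5 yields $[M^{+},w]=0$; because $w\in M^{+}$, the same lemma yields $[w,M^{-}]=0$. Thus $w$ commutes with every element of $M^{+}$ and with every element of $M^{-}$, hence with every element of $M^{+}+M^{-}=M$. Therefore $w\in Z(M)$, and since $M$ is centerless we get $w=0$, which is exactly $M^{+}\cap M^{-}=0$.

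The only substantive point is the identity $M=M^{+}+M^{-}$: it is what upgrades the mere relation $[M^{+},M^{-}]=0$ into the conclusion that a common element centralizes all of $M$, rather than just the two ideals separately. I expect no trouble from the $\mathbf{Z}_{2}$-grading, since $[M^{+},M^{-}]=0$ is a literal identity and the centralizing step carries no sign factors; the invertibility of $2$ is used exactly once, in the splitting of $f(x)$ above.
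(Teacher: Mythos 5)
Your proof is correct and is essentially the argument the paper intends: the paper's own proof of this lemma is just a citation of the Lie-algebra case \cite{Z2}, and the argument there is exactly yours --- use invertibility of $2$ to split $f(x)=\tfrac{1}{2}(f+\delta_{f})(x)+\tfrac{1}{2}(f-\delta_{f})(x)$, conclude $M=M^{+}+M^{-}$ since the sum of the two ideals is a subalgebra containing the generating set $f(L)$, and then combine $[M^{+},M^{-}]=0$ (Lemma 3.5) with $Z(M)=0$. The decomposition step you single out as the substantive point is the same one the paper itself reproduces inside the proof of Lemma 3.7, so your write-up is fully consistent with how the lemma is used later.
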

\begin{proof}
Similar with{\rm\cite{Z2}}.
\end{proof}

\begin{lem}\label{lem:2-2}
If $M$ can not be decomposed into a direct sum of two nontrivial
ideals. Then $f$ is either a homomorphism or an anti-homomorphism of
Lie superalgebras.
\end{lem}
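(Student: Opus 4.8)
The plan is to combine the structural Lemmas~3.4, 3.5 and 3.6 into a single decomposition $M = M^{+}\oplus M^{-}$ as a direct sum of ideals, and then let the indecomposability hypothesis collapse one of the summands, each collapse pinning down $f$ as a homomorphism or an anti-homomorphism. First I would show that $M = M^{+}+M^{-}$. Since $2$ is invertible in $R$, every $x\in L$ satisfies
\[
f(x)=\tfrac{1}{2}\bigl((f+\delta_{f})(x)+(f-\delta_{f})(x)\bigr)\in M^{+}+M^{-},
\]
so $f(L)\subseteq M^{+}+M^{-}$. By $\rm Lemma~3.4$ both $M^{+}$ and $M^{-}$ are ideals of $M$, hence their sum $M^{+}+M^{-}$ is again an ideal, in particular a Lie subalgebra of $M$. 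Because $M$ is by definition the Lie subalgebra generated by $f(L)$, any subalgebra containing $f(L)$ must be all of $M$; therefore $M=M^{+}+M^{-}$. Together with $\rm Lemma~3.6$, which gives $M^{+}\cap M^{-}=0$, and $\rm Lemma~3.5$, which gives $[M^{+},M^{-}]=0$, this exhibits $M$ as the direct sum $M=M^{+}\oplus M^{-}$ of the two ideals.

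Next I would apply the standing hypothesis of this lemma, namely that $M$ cannot be written as a direct sum of two nontrivial ideals. Applied to $M=M^{+}\oplus M^{-}$, this forces $M^{-}=0$ or $M^{+}=0$. If $M^{-}=0$, then $\mathrm{Im}(f-\delta_{f})=0$, i.e.\ $f=\delta_{f}$, and $\rm Lemma~3.3$ says $\delta_{f}$ is a homomorphism, so $f$ is a homomorphism. If instead $M^{+}=0$, then $f=-\delta_{f}$, and for homogeneous $x,y$ a short sign computation using $\rm Lemma~3.3$ and graded skew-symmetry yields
\[
f([x,y])=-\delta_{f}([x,y])=-[\delta_{f}(x),\delta_{f}(y)]=-[f(x),f(y)]=(-1)^{\left|x\right|\left|y\right|}[f(y),f(x)],
\]
so $f$ is an anti-homomorphism. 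In either case the conclusion of the lemma holds.

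I expect the genuinely structural step to be the identification $M=M^{+}+M^{-}$: it rests on $M$ being generated by $f(L)$ as a subalgebra and on the sum of two ideals being a subalgebra, and it is precisely here that invertibility of $2$ is used, through the averaging expression for $f(x)$. The subsequent case analysis is routine, the only point deserving care being the tracking of the graded sign in the anti-homomorphism identity when $f=-\delta_{f}$.
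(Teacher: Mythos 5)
Your proof is correct and follows essentially the same route as the paper's: the averaging identity $f(x)=\tfrac{1}{2}\bigl((f+\delta_{f})(x)+(f-\delta_{f})(x)\bigr)$ shows $f(L)\subseteq M^{+}+M^{-}$, Lemmas~3.4 and~3.6 then give $M=M^{+}\oplus M^{-}$, and indecomposability kills one summand, yielding a homomorphism when $f=\delta_{f}$ and an anti-homomorphism when $f=-\delta_{f}$. The only differences are cosmetic: you make explicit why $M\subseteq M^{+}+M^{-}$ (the sum of ideals is a subalgebra containing the generating set $f(L)$), which the paper leaves implicit, and you route the two final identities through Lemma~3.3 rather than through the defining formula $\delta_{f}([x,y])=[f(x),f(y)]$ of Lemma~3.1.
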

\begin{proof}
For $\forall x\in L$, let
$m^{+}=\frac{1}{2}(f(x)+\delta_{f}(x)),~m^{-}=\frac{1}{2}(f(x)-\delta_{f}(x))$.
Then $m^{+}\in M^{+},~m^{-}\in M^{-}$ and $f(x)=m^{+}+m^{-}.$ Hence,
$f(L)\subseteq M^{+}+M^{-}.$ Therefore, $M\subseteq M^{+}+M^{-}$. By
$\rm Lemma~3.6$, $M=M^{+}\oplus M^{-}.$ Since $M$ cannot be
decomposed into direct sum of two nontrivial ideals, either $M^{+}$
or $M^{-}$ must be trivial. If $M^{+}$ is trivial i.e.
$(f+\delta_{f})([x,y])=0$, then
$f([x,y])=-\delta_{f}([x,y])=-[f(x),f(y)]=(-1)^{\left|x\right|\left|y\right|}[f(y),f(x)].$
So $f$ is an anti-homomorphism. If $M^{-}$ is trivial i.e.
$(f-\delta_{f})([x,y])=0$, then
$f([x,y])=\delta_{f}([x,y])=[f(x),f(y)].$ So $f$ is a homomorphism.
Hence the lemma follows.
\end{proof}

{\bf Proof of Theorem 1.2}. By $\rm Lemma~3.7$, it remains to prove
the theorem in case $M$ is decomposable. By the assumptions, $M$ can
be written as the sum$$M=M_{1}\oplus M_{2}\oplus...\oplus M_{s},$$
where each $M_{i}$ is an indecomposable ideal of $M$. Since $M$ is
centerless, each $M_{i}$ is also centerless (See $\rm Lemma~3.1$ in
{\rm\cite{C}}). Let $p_{i}$ be the projection of $M$ into $M_{i}$.
Then, $f=\sum_{i=1}^{s}p_{i}f$ and $p_{i}f$ is a triple
homomorphisms from $L$ to $M_{i}$, and $M_{i}$ is the enveloping Lie
superalgebras of $p_{i}f(L)$ for $i=1,2...s.$ Since each $M_{i}$ is
indecomposable, by $\rm Lemma~3.7$,
 $p_{i}f$ is either a homomorphism or an anti-homomorphism from $L$
to $M_{i}$. Let $P=\{i|~p_{i}f~is~a~homomorphism\},$ $Q$ is the
complementary set of $P$ in the set $\{1,2...,s\}$. Let
$M_{1}=\sum_{i\in P}M_{i}$, $M_{2}=\sum_{i\in Q}M_{i}$. Let
$f_{1}=\sum_{i\in P}p_{i}f,$ $f_{1}=\sum_{i\in Q}p_{i}f$. It can be
checked by direct verification that $M=M_{1}\oplus
M_{2},~[M_{1},M_{2}]=0.~f=f_{1}+f_{2},$ $f_{1}$ is a homomorphism
and $f_{2}$ is an anti-homomorphism of Lie superalgebras. The
theorem is proved.

\noindent {\bf Acknowledgements}\quad The authors would like to
thank the referee for valuable comments and suggestions on this
article.

\end{document}